\title{Smullyan's truth and provability}
\author{Taishi Kurahashi\footnote{Email: kurahashi@people.kobe-u.ac.jp}
\footnote{Graduate School of System Informatics, Kobe University, 1-1 Rokkodai, Nada, Kobe 657-8501, Japan.}
and Kohei Tominaga\footnote{Email: tominaga@stu.kobe-u.ac.jp}
\footnote{Graduate School of System Informatics, Kobe University, 1-1 Rokkodai, Nada, Kobe 657-8501, Japan.}}
\date{}
\theoremstyle{plain}
\newtheorem{thm}{Theorem}[section]
\newtheorem*{thm*}{Theorem}
\newtheorem{lem}[thm]{Lemma}
\newtheorem{prop}[thm]{Proposition}
\newtheorem*{fact*}{Fact}
\newtheorem*{prob*}{Problem}
\newtheorem*{cl*}{Claim}
\newtheorem*{scl*}{Subclaim}
\theoremstyle{definition}
\newtheorem{dfn}[thm]{Definition}
\newcommand{\Pred}{\mathtt{Pred}}
\newcommand{\Stc}{\mathtt{Sent}}
\newcommand{\True}{\mathtt{True}}
\newcommand{\False}{\mathtt{False}}
\newcommand{\FPT}{\textup{FPT}}
\newcommand{\FTarski}{\textup{F-Tarski}}
\newcommand{\TTarski}{\textup{T-Tarski}}
\newcommand{\SFTarski}{\textup{F-Tarski}^+}
\newcommand{\STTarski}{\textup{T-Tarski}^+}
\newcommand{\NGodel}{\textup{G1}}
\newcommand{\PGodel}{\textup{mG1}}
\newcommand{\SNGodel}{\textup{G1}^+}
\newcommand{\SPGodel}{\textup{mG1}^+}
\newcommand{\N}{\mathsf{n}}
\newcommand{\R}{\mathsf{r}}
\newcommand{\PA}{\mathsf{PA}}
\newcommand{\gn}[1]{\ulcorner#1\urcorner}
\newcommand{\num}[1]{\overline{#1}}
\begin{document}

\maketitle

\begin{abstract}
We revisit Smullyan's paper ``Truth and Provability'' (2013) for three purposes.
First, we introduce the notion of Smullyan models to give a precise definition for Smullyan's framework discussed in that paper. 
Second, we clarify the relationship between three theorems  proved by Smullyan and other newly introduced properties for Smullyan models in terms of both implications and non-implications.
Third, we construct two Smullyan models based on arithmetical ideas and show the correspondence between the properties of these Smullyan models and those concerning truth and provability in arithmetic.
\end{abstract}

\section{Introduction}

G\"odel's First Incompleteness Theorem and Tarski's Undefinability Theorem are major achievements in mathematical logic and have had a great impact on mathematics and other fields.
A version of the First Incompleteness Theorem states that every computable sound extension of Peano arithmetic is incomplete. 
The Undefinability Theorem states that the set of all true sentences in the standard model of arithmetic is not definable in the standard model. 
These theorems are positioned nowadays as basic results in mathematical logic, but of course understanding the proofs of these theorems requires a reasonable amount of knowledge and experience in mathematical logic. 

The structures of the proofs of these theorems are themselves very interesting, and Smullyan wrote a number of books and papers to bringing the essence of these structures to the general reader (e.g.~\cite{Smu78,Smu82,Smu88,Smu09,Smu13b,Smu13}). 
In particular, the article \cite{Smu13} titled ``Truth and Provability'' published in \textit{The Mathematical Intelligencer} (2013) provided a concise presentation of the structures of the proofs of these theorems using a very simplified framework dealing with finite strings of symbols. 
He wrote as follows: 
\begin{quotation}
{\small 
The purpose of this article is to provide the general reader, even those readers with no familiarity with the symbolism of mathematical logic, with the essential ideas behind the proofs of the G\"odel and Tarski theorems. 
We do this by constructing a very simple system (an abstraction of part of Reference [1]\footnote{Reference \cite{Smu82} of the present article.}), which, despite its simplicity, has enough power for the Tarski and G\"odel arguments to go through. 
First we address Tarski’s theorem, and then G\"odel’s. (\cite[p.~21]{Smu13})
}
\end{quotation}

Roughly speaking, every Smullyan's system is specified by determining a set of predicates which are finite strings and determining which set of finite strings each predicate names. 
In addition, Smullyan's framework employs two prefixes $\N$ and $\R$ for predicates and special rules regarding the naming relation for the predicates prefixed by these symbols.\footnote{Smullyan adopted the capital letters N and R, while we use $\N$ and $\R$, respectively, in view of the use of lower case letters for symbols.} 
For such a simple system, Smullyan proved the following three theorems.

\begin{thm*}[Theorem F]
Every predicate has a fixed point. 
\end{thm*}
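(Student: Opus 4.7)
The plan is to imitate the Quine/diagonal construction using the prefix $\R$. Given an arbitrary predicate $H$, the first step is to form the predicate $\R H$ by prefixing $\R$ to $H$, and then to take as candidate fixed point the concatenation $\phi := (\R H)(\R H)$ of the string $\R H$ with itself. This is the Smullyan analogue of the G\"odel sentence; the key feature is that $\phi$ simultaneously plays two roles, namely a sentence built by applying the predicate $\R H$ to the expression $\R H$, and also the ``norm'' (self-concatenation) of the expression $\R H$.

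To check that $\phi$ really is a fixed point of $H$, I would unfold the two relevant definitions in opposite directions. On one side, by the defining rule for the $\R$ prefix, the predicate $\R H$ names an expression $X$ exactly when $H$ names the doubled string $XX$; instantiating $X := \R H$ yields that $\R H$ names $\R H$ iff $H$ names $(\R H)(\R H) = \phi$. On the other side, reading $\phi$ as a sentence---the predicate $\R H$ applied to the expression $\R H$---its truth is by definition the assertion that $\R H$ names $\R H$. Chaining these two equivalences gives $\phi$ is true iff $H$ names $\phi$, i.e.\ iff the sentence $H\phi$ is true, which is exactly the fixed-point condition.

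There is essentially no genuine obstacle here beyond spotting the right diagonal term: the argument is a one-line Quine trick once the $\R$-rule is in hand. The only points that need to be justified carefully, depending on how the author later formalises Smullyan models, are that the prefixing operation $H \mapsto \R H$ always produces a legitimate predicate, and that the naming rule for $\R H$ can be applied to the particular expression $X = \R H$ itself (so that the self-reference is well defined within the framework).
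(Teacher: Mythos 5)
Your proposal is correct and is essentially identical to the paper's proof: the paper takes the fixed point to be $\R H \R H$ and chains the same two equivalences, $M \models \R H \R H \iff \R H \in \Phi_M(\R H) \iff \R H \R H \in \Phi_M(H) \iff M \models H \R H \R H$. The technical points you flag are indeed handled by the definition of $\R$-Smullyan models (which guarantees $\R H \in \Pred_M$, so the $\R$-rule applies to $K = \R H$) and by requirement $(\dagger)$ (which guarantees $\R H \R H \in \Stc_M^+$).
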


\begin{thm*}[Theorem T]
The set of true sentences is not nameable. 
\end{thm*}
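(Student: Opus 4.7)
The plan is to carry out the classical diagonal argument underlying Tarski's undefinability theorem, using Theorem F as the sole combinatorial input. I would proceed by contradiction: assume there is a predicate $H$ that names the set of true sentences.

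First, I would invoke the naming rule for the prefix $\N$ that is built into Smullyan's framework. On the intended reading, $\N P$ names the complement of whatever $P$ names (within the appropriate universe of expressions or sentences). Applied to $H$, this produces a predicate $\N H$ whose named set is precisely the collection of non-true sentences.

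Next, I would apply Theorem F to $\N H$ to obtain a fixed point: a sentence $S$ such that $S$ is true if and only if $S$ belongs to the set named by $\N H$ --- that is, if and only if $S$ is not true. This contradiction rules out the existence of $H$ and completes the proof.

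The main obstacle is not combinatorial (the diagonal step is a single line) but rather bookkeeping: one must confirm that Smullyan's formal definitions deliver exactly what the argument uses, namely that $\N H$ is itself a predicate (so that Theorem F applies to it), that it names precisely the complement of what $H$ names, and that a fixed point of a predicate $P$ is literally a sentence whose truth value coincides with membership in the set named by $P$. Provided the definition of a Smullyan model set out earlier in the paper encodes these two features, the outline above is essentially the entire proof.
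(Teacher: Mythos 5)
Your proposal is correct and is essentially the paper's own proof: take the fixed point $S$ of $\N H$ supplied by Theorem F, and observe that $M \models S$ iff $S \in \Phi_M(\N H)$ iff $S \notin \Phi_M(H)$, so $\Phi_M(H)$ cannot equal $\True_M$ (the paper phrases this directly for arbitrary $H$ rather than by contradiction, which is immaterial). Your bookkeeping worries are also resolved by the paper's definitions: $\N H$ is required to be a predicate with $\Phi_M(\N H) = \Sigma^\ast_M \setminus \Phi_M(H)$, and since the fixed point is a sentence, the fact that this complement is taken in $\Sigma^\ast_M$ rather than in $\Stc_M$ causes no problem.
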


\begin{thm*}[Theorem G]
If a predicate $P$ names a set of true sentences, then there is a sentence X that is undecidable in $P$. 
\end{thm*}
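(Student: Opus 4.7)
The plan is to apply Theorem F to a predicate obtained from $P$ whose naming relation is the ``negation'' of $P$'s. The introduction notes that every Smullyan system carries two special prefixes $\N$ and $\R$ with dedicated naming rules; I expect that (possibly by combining these prefixes) one can manufacture from $P$ a new predicate $P'$ whose naming set is the complement, among words, of the set named by $P$. The precise construction will depend on the formal notion of a Smullyan model used in the paper.

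Once such a $P'$ is in hand, Theorem F supplies a fixed point: a sentence $X$ such that $X$ is true if and only if $X$ is named by $P'$. By the defining property of $P'$ this is equivalent to: $X$ is true if and only if $X$ is \emph{not} named by $P$.

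I then argue that $X$ is undecidable in $P$, i.e.\ neither $X$ nor $\neg X$ is named by $P$. If $X$ were named by $P$, then by the hypothesis on $P$ the sentence $X$ would be true, and the fixed-point equivalence would force $X$ not to be named by $P$---a contradiction. Hence $X$ is not named by $P$, so by the fixed-point equivalence $X$ is true, so $\neg X$ is false, so by the hypothesis on $P$ the sentence $\neg X$ is also not named by $P$.

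The main obstacle is the very first step: producing a predicate $P'$ that realizes complementation with respect to $P$. This hinges on the precise naming rules governing the $\N$ and $\R$ prefixes in whatever formal definition of a Smullyan model the paper adopts; in particular, it must be shown that the desired complementing behaviour really is guaranteed by those rules rather than smuggled in by hand. Once the complementation step is available, the remainder is the standard G\"odel-style diagonal argument, driven entirely by Theorem F together with the soundness hypothesis that $P$ names only truths.
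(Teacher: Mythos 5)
Your proposal is correct and is essentially the paper's own proof: the complementing predicate $P'$ is simply $\N P$, whose naming set is $\Sigma^\ast \setminus \Phi(P)$ \emph{by definition} of an $\N$-Smullyan model, and the rest (a fixed point of $\N P$ from Theorem F, followed by the soundness case analysis giving that neither $X$ nor $\N X$ is named by $P$) is exactly the argument in the paper. The ``main obstacle'' you flag therefore dissolves immediately once the formal definition is in hand --- no construction is needed.
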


We revisit Smullyan's ``Truth and Provability'' in this paper for three main purposes.
First, we introduce the notion of \textit{Smullyan models} to give a precise definition for Smullyan's framework in order to make it easier to discuss the framework mathematically.
Second, we clarify the relationship between the symbols $\N$ and $\R$ and the three theorems stated above.
For this purpose, in addition to the above three theorems originally considered by Smullyan, we introduce several other properties that would be expected to hold for Smullyan models and analyze their relationships in terms of both implications and non-implications.
Third, we construct Smullyan models based on arithmetical ideas and discuss the correspondence between the properties of these models and those concerning truth and provability in arithmetic.

The organization of the present paper is as follows. 
In Section \ref{SmullyanModels}, we introduce the notion of Smullyan models and reprove above mentioned three Smullyan's theorems according to Smullyan models. 
In Section \ref{properties}, we introduce several properties of Smullyan models and then prove some equivalences between these properties. 
The implications between these properties will be summarized in Figure \ref{Fig1}. 
Section \ref{non-implications} is devoted to proving non-implications between some of these properties by giving several counterexamples of Smullyan models. 
In Section \ref{arithmetic}, we construct two specific Smullyan models $M_{\mathbb{N}}$ and $M_{\PA}$ based on the standard model of arithmetic and Peano arithmetic, respectively. 
Among other things, we show that a stronger version of Theorem T for $M_{\mathbb{N}}$ actually yields the original Tarski's Undefinability Theorem.

\section{Smullyan models and Smullyan's theorems}\label{SmullyanModels}

In this section, we introduce the notion of Smullyan models and reprove Smullyan's three theorems. 

For each non-empty set $\Sigma$ of symbols, let $\Sigma^\ast$ denote the set of all finite strings of the elements of $\Sigma$. 
Let $\epsilon$ denote the empty string and we assume $\epsilon \in \Sigma^\ast$.  
For any $X, Y \in \Sigma^\ast$, let $X Y$ denote the finite string obtained by concatenating $Y$ after the last element of $X$.
For each $X \in \Sigma^\ast$ and $i \in \mathbb{N}$, $X^i$ is inductively defined as follows: $X^0$ is $\epsilon$; and $X^{i+1}$ is $X^i X$. 
So, $X^i$ is $\underbrace{XX\cdots X}_i$.

\begin{dfn}[Smullyan models]
A triple $M = (\Sigma, \Pred, \Phi)$ is said to be a \textit{Smullyan model} if it satisfies the following conditions: 
\begin{itemize}
    \item $\Sigma$ is a non-empty set of symbols. 
    \item $\Pred$ is a subset of $\Sigma^\ast$ satisfying the following requirement: 
    \begin{itemize}
       \item For any $H \in \Pred$ and $X \in \Sigma^\ast \setminus \{\epsilon\}$, we have $HX \notin \Pred$. \hfill ($\dagger$)
    \end{itemize}
    \item $\Phi$ is a function $\Pred \rightarrow \mathcal{P}(\Sigma^\ast)$. 
\end{itemize}
\end{dfn}

For every Smullyan model $M = (\Sigma, \Pred, \Phi)$, we adopt a convention that $\Sigma_M, \Sigma^\ast_M, \Pred_M$ and $\Phi_M$ denote $\Sigma, \Sigma^\ast, \Pred$ and $\Phi$, respectively. 

\begin{dfn}[Predicates and sentences]
Let $M$ be a Smullyan model. 
\begin{itemize}
    \item Every element of $\Pred_M$ is called an \textit{$M$-predicate}. 
    \item A finite string $Y \in \Sigma^\ast_M$ is said to be an \textit{$M$-sentence} if it is of the form $HX$ for some $H \in \Pred_M$ and $X \in \Sigma^\ast_M$. 
    Let $\Stc_M$ denote the set of all $M$-sentences. 
    \item Let $\Stc_M^+ : = \Stc_M \setminus \Pred_M$. 
\end{itemize}
\end{dfn}

Notice that every $M$-predicate $H$ is an $M$-sentence because $H \equiv H \epsilon$. 
Here $X \equiv Y$ means that the finite strings $X$ and $Y$ are identical. 
So, the definition of $\Stc_M^+$ makes sense. 
The following lemma explains why the requirement ($\dagger$) is imposed on the definition of Smullyan models. 

\begin{lem}
Let $M$ be a Smullyan model. 
For each $M$-sentence $S$, the unique $M$-predicate $H$ such that $S$ is of the form $HX$ for some $X \in \Sigma^\ast_M$ is found. 
\end{lem}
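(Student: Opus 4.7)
The plan is to read this statement as asserting both the existence and uniqueness of the predicate prefix of an $M$-sentence, and then observe that existence is immediate from the definition of $\Stc_M$, so the real work lies in uniqueness. Accordingly, I would open the proof by fixing an arbitrary $M$-sentence $S$, noting that by the definition of $\Stc_M$ there is at least one pair $(H, X) \in \Pred_M \times \Sigma^\ast_M$ with $S \equiv H X$, and then devoting the remainder of the argument to showing that the $M$-predicate $H$ is uniquely determined.

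For uniqueness I would suppose, toward contradiction (or simply toward a direct identification), that $S \equiv H_1 X_1 \equiv H_2 X_2$ for $H_1, H_2 \in \Pred_M$ and $X_1, X_2 \in \Sigma^\ast_M$. Without loss of generality I may assume that $H_1$ is no longer than $H_2$ as a string. Since both $H_1 X_1$ and $H_2 X_2$ are the same finite string of symbols of $\Sigma_M$, comparing symbol by symbol along the first $|H_2|$ positions shows that $H_2$ begins with $H_1$; hence there exists a string $Y \in \Sigma^\ast_M$ such that $H_2 \equiv H_1 Y$.

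Now I would invoke requirement ($\dagger$) from the definition of Smullyan models: since $H_1 \in \Pred_M$ and $H_2 \in \Pred_M$, we cannot have $Y \in \Sigma^\ast_M \setminus \{\epsilon\}$, for otherwise $H_1 Y \notin \Pred_M$ while $H_1 Y \equiv H_2 \in \Pred_M$. Therefore $Y \equiv \epsilon$, so $H_1 \equiv H_2$, and then cancellation of the common prefix from $H_1 X_1 \equiv H_2 X_2$ yields $X_1 \equiv X_2$ as well.

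The argument is essentially a prefix comparison, and the only subtlety I foresee is being careful in the comparison step to justify that when two concatenations $H_1 X_1$ and $H_2 X_2$ coincide as strings and $|H_1| \le |H_2|$, the shorter predicate really is an initial segment of the longer one; this is a basic fact about elements of $\Sigma^\ast_M$ and I would simply appeal to it rather than formalize it further. The condition ($\dagger$) is the load-bearing hypothesis, and the lemma is precisely its intended consequence.
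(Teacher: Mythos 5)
Your proposal is correct and follows essentially the same route as the paper: a prefix comparison showing that one predicate would have to extend the other by a nonempty string, which requirement ($\dagger$) forbids. The only cosmetic difference is that the paper phrases this as a direct contradiction from assuming two distinct predicates, whereas you argue directly that the extending string must be empty (and additionally note $X_1 \equiv X_2$, which the lemma does not require).
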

\begin{proof}
    Suppose, towards a contradiction, that $M$-sentences $HX$ and $H' X'$ are identical for some distinct $M$-predicates $H$ and $H'$. 
    Without loss of generality, we may assume that $H$ is a proper initial segment of $H'$. 
    Then, we find a non-empty $Y \in \Sigma^\ast_M$ such that $H' \equiv HY$. 
    This violates the requirement ($\dagger$). 
\end{proof}

For any Smullyan model $M$, we say that an $M$-predicate $H$ \textit{names} a subset $V \subseteq \Sigma^\ast_M$ if $V = \Phi_M(H)$. 
For each Smullyan model $M$, an $M$-sentence $HX$ is intended to express the statement that `$X$ is contained in the set of all strings named by the $M$-predicate $H$'. 
Thus, each $M$-sentence is determined to be \textit{true} or \textit{false} depending on whether the intended statement actually holds or not, respectively.

\begin{dfn}
Let $M$ be a Smullyan model. 
\begin{itemize}
    \item $\True_M : = \{HX \in \Stc_M \mid H \in \Pred_M$ and $X \in \Phi_M(H)\}$. 
    \item $\True_M^+ : = \{HX \in \Stc_M^+ \mid H \in \Pred_M$ and $X \in \Phi_M(H)\}$. 
    \item $\False_M : = \Stc_M \setminus \True_M$. 
    \item $\False_M^+ : = \Stc_M^+ \setminus \True_M^+$. 
\end{itemize}
For each $M$-sentence $S$, we write $M \models S$ if $S \in \True_M$. 
\end{dfn}

Notice that $\True_M^+ = \True_M \setminus \Pred_M$ and $\False_M^+ = \False_M \setminus \Pred_M$ hold.

In addition to the basic framework described above, Smullyan considered the system equipped with the two special symbols $\N$ and $\R$.

\begin{dfn}[$\N$-Smullyan models, $\R$-Smullyan models, and $\N \R$-Smullyan models]
Let $M$ be a Smullyan model. 
\begin{itemize}
    \item $M$ is called an \textit{$\N$-Smullyan model} if $\N \in \Sigma_M$ and for each $H \in \Pred_M$, we have $\N H \in \Pred_M$ and
    \[
        \Phi_M(\N H) = \Sigma^\ast_M \setminus \Phi_M(H).
    \]
    \item $M$ is called an \textit{$\R$-Smullyan model} if $\R \in \Sigma_M$ and for each $H \in \Pred_M$, we have $\R H \in \Pred_M$ and
\[
    \Phi_M(\R H) = \{K \in \Pred_M \mid KK \in \Phi_M(H)\}.
\]
    \item $M$ is said to be an \textit{$\N \R$-Smullyan model} if it is both an $\N$-Smullyan model and an $\R$-Smullyan model. 
\end{itemize}
\end{dfn}

For each $\N$-Smullyan model $M$, the symbol $\N$ behaves as the negation, that is, it is easily shown that for any $M$-sentence $S$, we have that $M \models \N S$ if and only if $M \not \models S$. 
The symbol $\R$ was used by Smullyan ``to suggest the word \textit{repeat}''.

\begin{dfn}
Let $M$ be a Smullyan model. 
We say that $S \in \Stc_M^+$ is an \textit{$M$-fixed point} of an $M$-predicate $H$ if the following equivalence holds: 
\[
    M \models S \iff M \models HS.
\]
\end{dfn}

\begin{thm}[Fixed Point Theorem {\cite[Theorem F]{Smu13}}]\label{THM_FPT}
Let $M$ be an $\R$-Smullyan model. 
For any $M$-predicate $H$, there exists an $M$-fixed point of $H$.
\end{thm}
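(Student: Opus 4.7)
The plan is to mimic the classical diagonal construction: given an $M$-predicate $H$, I would form the predicate $\R H$ (which exists because $M$ is an $\R$-Smullyan model) and take the candidate fixed point to be the string $S \equiv (\R H)(\R H)$, obtained by concatenating $\R H$ with itself.

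First I would check that $S$ really is an $M$-sentence, and moreover lies in $\Stc_M^+$. Since $\R H \in \Pred_M$ and $S$ is of the form $(\R H) X$ with $X \equiv \R H \in \Sigma^\ast_M$, we have $S \in \Stc_M$. Furthermore $X$ is nonempty (it contains the symbol $\R$), so the requirement ($\dagger$) applied to the predicate $\R H$ gives $S \notin \Pred_M$, hence $S \in \Stc_M^+$. This justifies asking whether $S$ is a fixed point in the sense of the definition.

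Next I would run through the biconditional by unwinding the defining clause for the symbol $\R$. We have $M \models S$ iff $\R H \in \Phi_M(\R H)$, and by the defining equation $\Phi_M(\R H) = \{K \in \Pred_M \mid KK \in \Phi_M(H)\}$ this is equivalent to $(\R H)(\R H) \in \Phi_M(H)$, i.e.\ $S \in \Phi_M(H)$. But the latter is exactly $M \models HS$, because $HS$ parses as the $M$-sentence with leading predicate $H$ and argument $S$. This chain of equivalences yields $M \models S \iff M \models HS$, as required.

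I do not anticipate a substantive obstacle: the content is purely the Smullyan/G\"odel diagonal trick repackaged in this notation. The only subtleties are bookkeeping ones, namely checking that ($\dagger$) places $S$ into $\Stc_M^+$ (so that the fixed-point definition applies) and that the concatenation $(\R H)(\R H)$ parses unambiguously with $\R H$ as its leading predicate, which is exactly the content of the parsing lemma proved just above the statement.
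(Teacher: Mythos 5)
Your proposal is correct and is essentially identical to the paper's proof: both take $S \equiv \R H \R H$ and verify $M \models S \iff \R H \in \Phi_M(\R H) \iff \R H \R H \in \Phi_M(H) \iff M \models HS$ via the defining clause for $\R$. Your explicit check via ($\dagger$) that $S \in \Stc_M^+$ is a small piece of bookkeeping the paper leaves implicit, and it is done correctly.
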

\begin{proof}
For each $H \in \Pred_M$, the following equivalences show that $\R H \R H \in \Stc_M^+$ is an $M$-fixed point of $H$: 
\[
M \models \R H \R H \iff \R H \in \Phi_M(\R H) \iff \R H \R H \in \Phi_M(H) \iff M \models H \R H \R H. \qedhere
\]
\end{proof}

\begin{thm}[Tarski's Undefinability Theorem {\cite[Theorem T]{Smu13}}]\label{THM_TTARSKI}
For any $\N \R$-Smullyan model $M$, there is no $M$-predicate that names $\True_M$.
\end{thm}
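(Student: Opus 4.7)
The plan is to assume for contradiction that some $M$-predicate $H$ names $\True_M$, and then to use the $\N$-structure to convert $H$ into a falsity predicate, against which the Fixed Point Theorem supplies a liar sentence. Specifically, since $M$ is an $\N$-Smullyan model and $H \in \Pred_M$, the string $\N H$ is again in $\Pred_M$ and
\[
    \Phi_M(\N H) = \Sigma^\ast_M \setminus \Phi_M(H) = \Sigma^\ast_M \setminus \True_M.
\]

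Next, since $M$ is also an $\R$-Smullyan model, I would apply the Fixed Point Theorem (Theorem \ref{THM_FPT}) to the $M$-predicate $\N H$, obtaining some $S \in \Stc_M^+$ with the equivalence $M \models S \iff M \models \N H S$. The last step is to unfold the right-hand side: since $\N H \in \Pred_M$, the concatenation $\N H S$ is an $M$-sentence, so
\[
    M \models \N H S \iff S \in \Phi_M(\N H) \iff S \notin \True_M \iff M \not\models S.
\]
Chaining this with the fixed-point equivalence yields $M \models S \iff M \not\models S$, the desired contradiction.

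I do not anticipate any serious obstacle, since the argument is essentially a plug-and-play combination of Theorem \ref{THM_FPT} with the $\N$-clause of $\N \R$-Smullyan models: the $\R$-clause supplies self-reference via the Fixed Point Theorem, and the $\N$-clause converts the hypothetical truth predicate into its complement, to which the diagonal construction is then applied. What is worth flagging is that both special-symbol clauses are used essentially, so one should expect Theorem T to fail if either $\N$ or $\R$ is dropped; presumably this is part of the non-implication analysis announced for Section \ref{non-implications}.
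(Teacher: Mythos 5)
Your proposal is correct and is essentially the paper's own argument: both apply the Fixed Point Theorem to $\N H$ and use the $\N$-clause to flip truth values at the fixed point, the only difference being that you phrase it as a proof by contradiction while the paper directly shows $S \in \True_M \iff S \notin \Phi_M(H)$ for an arbitrary $H$, so that no $H$ can name $\True_M$.
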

\begin{proof}
Let $H$ be any $M$-predicate. 
By the Fixed Point Theorem, we find an $M$-fixed point $S$ of the $M$-predicate $\N H$. 
Then, we have
\begin{align*}
    S \in \True_M & \iff M \models S \iff M \models \N HS\\
    &\iff M \not \models H S \iff S \notin \Phi_M(H).
\end{align*}
These equivalences show that $H$ does not name $\True_M$.
\end{proof}

\begin{thm}[G\"odel's First Incompleteness Theorem {\cite[Theorem G]{Smu13}}]\label{THM_NGODEL}
Let $M$ be an $\N \R$-Smullyan model. 
For any $M$-predicate $H$ satisfying $\Phi_M(H) \subseteq \True_M$, there exists an $M$-sentence $S$ such that $S \notin \Phi_M(H)$ and $\N S \notin \Phi_M(H)$.
\end{thm}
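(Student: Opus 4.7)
The plan is to apply the Fixed Point Theorem to the $M$-predicate $\N H$, which lies in $\Pred_M$ by the $\N$-Smullyan condition. This yields an $M$-sentence $S$ with $M \models S \iff M \models \N H S$. Since $\N$ acts as negation and since $M \models HS$ is equivalent to $S \in \Phi_M(H)$, this unfolds into the diagonal equivalence
\[
    M \models S \iff S \notin \Phi_M(H).
\]
This plays exactly the role of the G\"odel sentence ``I am not in $\Phi_M(H)$''.

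Next I would argue $S \notin \Phi_M(H)$. If instead $S \in \Phi_M(H)$, then the soundness hypothesis $\Phi_M(H) \subseteq \True_M$ forces $M \models S$, and the diagonal equivalence then returns $S \notin \Phi_M(H)$, a contradiction. Hence $S \notin \Phi_M(H)$, and by the same equivalence $M \models S$.

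Finally I would verify $\N S \notin \Phi_M(H)$. Since $M \models S$, the negation-behavior of $\N$ gives $M \not\models \N S$, so $\N S \notin \True_M$; were $\N S \in \Phi_M(H)$, the soundness hypothesis would again produce a contradiction. A minor point to check in passing is that $\N S$ is actually an $M$-sentence: writing $S \equiv K X$ for its unique predicate initial segment $K \in \Pred_M$, the $\N$-Smullyan condition gives $\N K \in \Pred_M$, so $\N S \equiv (\N K) X \in \Stc_M$.

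I do not expect any real obstacle here. Once the Fixed Point Theorem and the negation behavior of $\N$ are available, the argument is the classical G\"odel diagonalization transplanted into the Smullyan-model setting, with the inclusion $\Phi_M(H) \subseteq \True_M$ playing the role of soundness. The main conceptual content is simply choosing to diagonalize against $\N H$ rather than against $H$ itself.
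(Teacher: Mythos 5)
Your proposal is correct and follows essentially the same route as the paper's proof: diagonalize against $\N H$ via the Fixed Point Theorem, derive $M \models S \iff S \notin \Phi_M(H)$, and use the soundness hypothesis $\Phi_M(H) \subseteq \True_M$ twice to conclude $S \notin \Phi_M(H)$ and $\N S \notin \Phi_M(H)$. Your extra check that $\N S$ is indeed an $M$-sentence is a fine (if implicit in the paper) detail.
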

\begin{proof}
Suppose that an $M$-predicate $H$ satisfies $\Phi_M(H) \subseteq \True_M$. 
By the Fixed Point Theorem, we find an $M$-fixed point $S$ of the $M$-predicate $\N H$.
We have 
\begin{align*}
    S \in \True_M & \iff M \models S \iff M \models \N HS \\
    & \iff M \not \models HS \iff S \notin \Phi_M(H).
\end{align*}
By combining these equivalences with the supposition $\Phi_M(H) \subseteq \True_M$, we get $S \in \True_M$ and $S \notin \Phi_M(H)$.
Then, we have $\N S \notin \True_M$, which implies $\N S \notin \Phi_M(H)$. 
We have shown that the $M$-sentence $S$ witnesses the theorem. 
\end{proof}

\section{Properties of Smullyan models}\label{properties}

In this section, we introduce several properties of Smullyan models. 
We then prove some equivalences between these properties. 
We would like to mention here that the equivalence of the Fixed Point Theorem, Tarski's theorem, and G\"odel's theorem in arithmetic was discussed by Salehi \cite{sal22,Sal24}.

\begin{dfn}[Properties of Smullyan models]\label{def:properties}
We consider the following properties of Smullyan models $M$:
\begin{itemize}
    \item ($\FPT$) Every $M$-predicate has an $M$-fixed point.
    \item ($\TTarski$) There is no $M$-predicate that names $\True_M$.
    \item ($\FTarski$) There is no $M$-predicate that names $\False_M$.
    \item ($\STTarski$) There is no $M$-predicate $H$ such that $\True_M^+ = \Phi_M(H) \cap \Stc_M^+$.
    \item ($\SFTarski$) There is no $M$-predicate $H$ such that $\False_M^+ = \Phi_M(H) \cap \Stc_M^+$.
    \item ($\PGodel$) For any $M$-predicate $H$ satisfying $\Phi_M(H) \subseteq \True_M$, there exists $S \in \Stc_M$ such that $M \models S$ and $S \notin \Phi_M(H)$.
    \item ($\SPGodel$) For any $M$-predicate $H$ satisfying $\Phi_M(H) \cap \Stc_M^+ \subseteq \True_M^+$, there exists $S \in \Stc_M^+$ such that $M \models S$ and $S \notin \Phi_M(H)$.
\end{itemize}

We also consider the following properties of $\N$-Smullyan models $M$:

\begin{itemize}
    \item ($\NGodel$) For any $M$-predicate $H$ satisfying $\Phi_M(H) \subseteq \True_M$, there exists $S \in \Stc_M$ such that $S \notin \Phi_M(H)$ and $\N S \notin \Phi_M(H)$.
    \item ($\SNGodel$) For any $M$-predicate $H$ satisfying $\Phi_M(H) \cap \Stc_M^+ \subseteq \True_M^+$, there exists $S \in \Stc_M^+$ such that $S \notin \Phi_M(H)$ and $\N S \notin \Phi_M(H)$.
\end{itemize}
\end{dfn}

$\FPT$ stands for `Fixed Point Theorem' and Smullyan's Theorem F (Theorem \ref{THM_FPT}) states that every $\R$-Smullyan model satisfies $\FPT$. 
$\TTarski$ and $\FTarski$ state that Tarski's Undefinability Theorem holds for $M$ with respect to $\True_M$ and $\False_M$, respectively. 
Theorem \ref{THM_TTARSKI} states that every $\N \R$-Smullyan model satisfies $\TTarski$. 
Although $\TTarski$ and $\FTarski$ seem to be equivalent, indeed they are not.
In fact, we will prove in the next section that these properties are incomparable even if we consider $\N$-Smullyan models (Propositions \ref{FT-non-TT} and \ref{TT-non-FT}).
This incomparability is caused by the reason that, in general, the set named by an $M$-predicate $H$ may contain finite strings that are not $M$-sentences, and such $H$ trivially names neither $\True_M$ nor $\False_M$. 
Thus, we consider $\STTarski$ and $\SFTarski$, which are stronger versions of $\TTarski$ and $\FTarski$, respectively, in which this triviality is removed.
The reason why we defined these strong properties using $\Stc^+_M$ rather than $\Stc_M$ is to yield meaningful properties of arithmetic in Section \ref{arithmetic}.
In the next section, we will prove that the stronger versions are actually strictly stronger than the original ones. 
We also prove in this section that $\STTarski$ and $\SFTarski$ are equivalent for any $\N$-Smullyan models (Proposition \ref{SFT-STT}). 
Interestingly, this stronger version $\SFTarski$ of $\FTarski$ is equivalent to $\FPT$ for any Smullyan model (Proposition \ref{FPT-SFT}).

Theorem \ref{THM_NGODEL} states that every $\N \R$-Smullyan model satisfies $\NGodel$, where G1 stands for `G\"odel's 1st Theorem'.
However, $\NGodel$ needs to consider $\N$-Smullyan models to state it, so it is a bit awkward for our purposes of analyzing the general situation of Smullyan models.
For this reason, we introduce a new property $\PGodel$ which corresponds to a version of G\"odel's First Incompleteness Theorem stating that `every computable sound extension of Peano arithmetic has a true but unprovable sentence'. 
Here `m' stands for `modified'. 
We prove in this section that $\NGodel$ and $\PGodel$ are equivalent for any $\N$-Smullyan model (Proposition \ref{PG-NG}). 
Furthermore, we prove that $\TTarski$ and $\PGodel$ are equivalent for every Smullyan model (Proposition \ref{TT-PG}). 
The properties $\SNGodel$ and $\SPGodel$ are stronger versions of $\NGodel$ and $\PGodel$, respectively, in which the triviality is removed.

\subsection{Equivalences between the properties}

We show several equivalences between the properties introduced above.
The results of this section are summarized in Figure \ref{Fig1}. 
In conclusion of this section, it is sufficient to consider the four properties $\TTarski$, $\FTarski$, $\STTarski$, and $\SFTarski$ when dealing with the properties we have introduced.

\begin{figure}[ht]
\centering
\begin{tikzpicture}
\node (FPT) at (0,2) {$\FPT$};
\node (SFT) at (2.5, 2) {$\SFTarski$};
\node (FT) at (2.5, 0) {$\FTarski$};
\node (STT) at (5, 2) {$\STTarski$};
\node (TT) at (5, 0) {$\TTarski$};
\node (SPG) at (7.5, 2) {$\SPGodel$};
\node (PG) at (7.5, 0) {$\PGodel$};
\node (SNG) at (10, 2) {$\SNGodel$};
\node (NG) at (10, 0) {$\NGodel$};

\draw [<->, double] (FPT)--(SFT);
\draw [->, double] (SFT)--(FT);
\draw [<->, double] (SFT)--(STT);
\draw [->, double] (STT)--(TT);
\draw [<->, double] (STT)--(SPG);
\draw [<->, double] (TT)--(PG);
\draw [->, double] (SPG)--(PG);
\draw [<->, double] (SPG)--(SNG);
\draw [<->, double] (PG)--(NG);
\draw [->, double] (SNG)--(NG);

\draw (1,2.2) node[above]{Prop.~\ref{FPT-SFT}};
\draw (3.7,2.2) node[above]{Prop.~\ref{SFT-STT}};
\draw (6.2,2.2) node[above]{Prop.~\ref{TT-PG}};
\draw (6.2,0.2) node[above]{Prop.~\ref{TT-PG}};
\draw (8.7,2.2) node[above]{Prop.~\ref{PG-NG}};
\draw (8.7,0.2) node[above]{Prop.~\ref{PG-NG}};

\draw (3.75,2) node[below]{$\N$};
\draw (8.75,2) node[below]{$\N$};
\draw (8.75,0) node[below]{$\N$};

\end{tikzpicture}
\caption{Implications between the properties}\label{Fig1}
\end{figure}
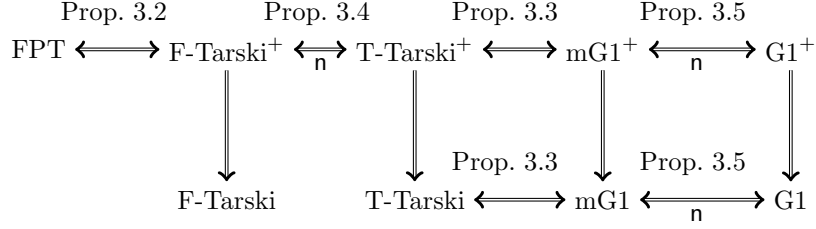

Notice that what Smullyan achieved can also be understood through the equivalences shown in Figure \ref{Fig1}. 
For, Theorem \ref{THM_FPT} states that every $\R$-Smullyan model satisfies $\FPT$, and hence Figure \ref{Fig1} shows that every $\N \R$-Smullyan model satisfies all the properties indicated in the figure. 

\begin{prop}\label{FPT-SFT}
$\FPT$ and $\SFTarski$ are equivalent for any Smullyan model.
\end{prop}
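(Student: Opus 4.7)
The plan is to establish both implications by contradiction, exploiting a single underlying observation: for any $M$-predicate $H$ and any $S \in \Stc_M^+$, the statement ``$S$ is \emph{not} an $M$-fixed point of $H$'' unwinds to the equivalence $S \in \False_M^+ \iff S \in \Phi_M(H)$, since $M \models HS$ is by definition $S \in \Phi_M(H)$, and $M \not\models S$ (given $S \in \Stc_M^+$) amounts to $S \in \False_M^+$.

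For the forward direction $\FPT \Rightarrow \SFTarski$, I would suppose towards a contradiction that some $M$-predicate $H$ satisfies $\False_M^+ = \Phi_M(H) \cap \Stc_M^+$. Applying $\FPT$ to $H$ yields an $M$-fixed point $S \in \Stc_M^+$, for which $M \models S \iff M \models HS \iff S \in \Phi_M(H) \cap \Stc_M^+ = \False_M^+$. Since $\True_M^+$ and $\False_M^+$ are disjoint and $S \in \Stc_M^+$, the resulting equivalence between $M \models S$ and $S \in \False_M^+$ is impossible.

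For the converse $\SFTarski \Rightarrow \FPT$, I would argue again by contraposition. If some $M$-predicate $H$ has no $M$-fixed point, then for every $S \in \Stc_M^+$ the equivalence $M \models S \iff M \models HS$ must fail, i.e., $M \models S \iff M \not\models HS$. Unwinding, this reads $S \in \True_M^+ \iff S \notin \Phi_M(H)$ for every $S \in \Stc_M^+$, which after complementing within $\Stc_M^+$ gives exactly $\False_M^+ = \Phi_M(H) \cap \Stc_M^+$, so $H$ itself witnesses the failure of $\SFTarski$.

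I do not foresee any substantial obstacle. The only subtlety worth flagging is that $\FPT$ insists on fixed points lying in $\Stc_M^+$ and $\SFTarski$ is phrased in terms of $\Stc_M^+$ as well; these two $+$-restrictions correspond precisely, so the same predicate $H$ plays the role of witness in both directions without any need to modify it.
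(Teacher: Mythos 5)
Your argument is correct and is essentially the paper's proof: the paper establishes the same correspondence between ``$S$ is an $M$-fixed point of $H$'' and ``$S$ witnesses $\False_M^+ \neq \Phi_M(H) \cap \Stc_M^+$'' via a single chain of biconditionals, whereas you split it into two contrapositive directions. The key observation you flag --- that both $+$-restrictions line up so the same predicate $H$ serves as witness in both directions --- is exactly what makes the paper's equivalence chain go through.
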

\begin{proof}
Let $M$ be any Smullyan model. 
This proposition is proved by the following equivalences: 
\begin{align*}
\FPT 
\iff  & (^\forall H \in\Pred_M)\left(^\exists S \in \Stc_M^+ \right) \left( M \models S \iff M \models HS \right) 
\\ \iff & (^\forall H \in\Pred_M) \left(^\exists S \in \Stc_M^+ \right) \left(S \in \True_M \iff  S \in \Phi_M(H) \right) 
\\ \iff & (^\forall H \in\Pred_M) \left(^\exists S \in \Stc_M^+ \right) \\
    & \qquad \qquad \qquad \left(S \in \False_M^+ \iff S \notin \Phi_M(H) \cap \Stc_M^+ \right) 
\\ \iff & (^\forall H \in\Pred_M) \left(\False_M^+ \ne \Phi_M(H) \cap \Stc_M^+ \right) 
\\ \iff & \SFTarski. \qedhere
\end{align*}
\end{proof}

\begin{prop}\label{TT-PG}\leavevmode
\begin{enumerate}
    \item $\TTarski$ and $\PGodel$ are equivalent for any Smullyan model.
    \item $\STTarski$ and $\SPGodel$ are equivalent for any Smullyan model.
\end{enumerate}
\end{prop}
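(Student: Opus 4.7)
The plan is to prove each equivalence by a direct contrapositive argument. In both parts, the key observation is that the hypothesis $\Phi_M(H) \subseteq \True_M$ (resp.\ $\Phi_M(H) \cap \Stc_M^+ \subseteq \True_M^+$) built into the $\PGodel$/$\SPGodel$ statement says that $H$ ``under-names'' the truth set, while the conclusion says that $H$ does not manage to name the entire truth set. Thus the G\"odel-style properties are essentially just splitting the Tarski-style equality $\Phi_M(H) = \True_M$ into its two inclusions.

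For part (1), I would first assume $\PGodel$ and, toward a contradiction, let $H$ be an $M$-predicate that names $\True_M$, so $\Phi_M(H) = \True_M$. Then trivially $\Phi_M(H) \subseteq \True_M$, so $\PGodel$ yields $S \in \Stc_M$ with $M \models S$ and $S \notin \Phi_M(H)$; but $M \models S$ means $S \in \True_M = \Phi_M(H)$, a contradiction. Conversely, I would assume $\TTarski$ and suppose toward a contradiction that some $H$ satisfies $\Phi_M(H) \subseteq \True_M$ while every $S \in \Stc_M$ with $M \models S$ lies in $\Phi_M(H)$; the latter gives $\True_M \subseteq \Phi_M(H)$, so combining both inclusions yields $\Phi_M(H) = \True_M$, contradicting $\TTarski$.

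For part (2), I would run the same argument with $\Stc_M^+$ bookkeeping. Assuming $\SPGodel$, any $H$ with $\True_M^+ = \Phi_M(H) \cap \Stc_M^+$ already satisfies $\Phi_M(H) \cap \Stc_M^+ \subseteq \True_M^+$, so $\SPGodel$ produces $S \in \Stc_M^+$ with $M \models S$ and $S \notin \Phi_M(H)$; but then $S \in \True_M^+ = \Phi_M(H) \cap \Stc_M^+ \subseteq \Phi_M(H)$, a contradiction. Conversely, assuming $\STTarski$ and a counterexample $H$ to $\SPGodel$, every $S \in \True_M^+$ lies in $\Phi_M(H)$, and since $\True_M^+ \subseteq \Stc_M^+$ this gives $\True_M^+ \subseteq \Phi_M(H) \cap \Stc_M^+$; combined with the hypothesis $\Phi_M(H) \cap \Stc_M^+ \subseteq \True_M^+$ we obtain $\True_M^+ = \Phi_M(H) \cap \Stc_M^+$, contradicting $\STTarski$.

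There is no substantial obstacle here; the only point requiring a little care is that $\Phi_M(H)$ is by definition an arbitrary subset of $\Sigma_M^\ast$ and need not be contained in $\Stc_M$, which is precisely why in part (2) the relevant quantity on the predicate side must be $\Phi_M(H) \cap \Stc_M^+$ rather than $\Phi_M(H)$ itself. Once this intersection is consistently taken, both equivalences follow at once from the definitions.
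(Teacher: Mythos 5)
Your proposal is correct and takes essentially the same route as the paper: both reduce each equivalence to the pointwise observation that, for a fixed predicate $H$, the inequality $\True_M \neq \Phi_M(H)$ (resp.\ $\True_M^+ \neq \Phi_M(H) \cap \Stc_M^+$) is just the Tarski equality split into its two inclusions, which is exactly the conditional form appearing in $\PGodel$ (resp.\ $\SPGodel$). The paper states this as a one-line logical equivalence and leaves clause (2) as ``similar,'' whereas you spell out both directions by contraposition and carry out the $\Stc_M^+$ bookkeeping explicitly, including the correct remark that $\Phi_M(H)$ need not be contained in $\Stc_M$.
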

\begin{proof}
We only prove the first clause. 
The second clause is proved in the similar way. 
Let $M$ be any Smullyan model. 
It is easy to see that for each $H \in \Pred_M$, the condition $\True_M \neq \Phi_M(H)$ is equivalent to 
\begin{center}
    ``$\Phi_M(H) \subseteq \True_M \Rightarrow \ ^\exists S \in \Stc_M (M \models S\ \&\ S \notin \Phi_M(H)$)''. 
\end{center}
This shows that $\TTarski$ and $\PGodel$ are equivalent for $M$. 
\end{proof}

\begin{prop}\label{SFT-STT}
$\SFTarski$ and $\STTarski$ are equivalent for any $\N$-Smullyan model. 
\end{prop}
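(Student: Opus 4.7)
The plan is to exploit the fact that in an $\N$-Smullyan model the operator $\N$ acts as a complement at the level of the naming function: $\Phi_M(\N H) = \Sigma^\ast_M \setminus \Phi_M(H)$ for every $M$-predicate $H$. Since both $\STTarski$ and $\SFTarski$ are phrased by intersecting the named set with $\Stc_M^+$, and since $\True_M^+$ and $\False_M^+$ are complementary subsets of $\Stc_M^+$ by definition, one direction simply consists of turning a counterexample for one property into a counterexample for the other by prefixing an $\N$.

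More concretely, I would prove the contrapositive in both directions. Suppose $\STTarski$ fails, witnessed by an $M$-predicate $H$ with $\True_M^+ = \Phi_M(H) \cap \Stc_M^+$. I would verify the chain
\begin{align*}
\Phi_M(\N H) \cap \Stc_M^+
&= (\Sigma^\ast_M \setminus \Phi_M(H)) \cap \Stc_M^+ \\
&= \Stc_M^+ \setminus (\Phi_M(H) \cap \Stc_M^+) \\
&= \Stc_M^+ \setminus \True_M^+ = \False_M^+,
\end{align*}
using that $M$ is an $\N$-Smullyan model, so $\N H \in \Pred_M$ and witnesses the failure of $\SFTarski$. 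The reverse direction is entirely symmetric: starting from an $H$ witnessing $\False_M^+ = \Phi_M(H) \cap \Stc_M^+$, the same calculation yields $\Phi_M(\N H) \cap \Stc_M^+ = \Stc_M^+ \setminus \False_M^+ = \True_M^+$, so $\N H$ refutes $\STTarski$.

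There is no genuine obstacle here; the argument is a bookkeeping exercise on set complements, and the only non-trivial ingredients are (i) the definition of an $\N$-Smullyan model, which guarantees $\N H \in \Pred_M$ and supplies the complement equation for $\Phi_M$, and (ii) the identity $\False_M^+ = \Stc_M^+ \setminus \True_M^+$, which follows directly from the definitions given in Section \ref{SmullyanModels}. If anything, the only subtle point to flag is that the intersection with $\Stc_M^+$ is essential: without it, one would need $\Phi_M(H)$ itself to equal $\True_M$ or $\False_M$, which is not what the strong properties assert. The plus-version restriction harmonizes cleanly with $\N$-complementation precisely because $\Stc_M^+$ is stable under this set-theoretic flipping.
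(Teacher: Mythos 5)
Your proof is correct and follows essentially the same route as the paper's: both rest on the observation that, since $\Phi_M(\N H) = \Sigma^\ast_M \setminus \Phi_M(H)$ and $\False_M^+ = \Stc_M^+ \setminus \True_M^+$, prefixing $\N$ turns a witness of the failure of $\STTarski$ into a witness of the failure of $\SFTarski$ and vice versa. The paper merely packages this same complementation argument as a single chain of universally quantified equivalences rather than as a two-directional contrapositive.
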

\begin{proof}
Let $M$ be any $\N$-Smullyan model. 
Notice that $\Phi_M(H) = \Phi_M(\N\N H)$ holds for any $H \in \Pred_M$. 
Then, this proposition is shown by the following equivalences: 
\begin{align*}
\SFTarski
\iff & (^\forall H \in\Pred_M)\left(\False_M^+ \neq \Phi_M(H)\cap  \Stc_M^+ \right)\\
\iff & (^\forall H \in\Pred_M) \bigl[\left(\False_M^+ \neq \Phi_M(H)\cap  \Stc_M^+ \right) \\
    & \qquad \qquad \qquad \qquad \&\  \left(\False_M^+ \neq \Phi_M(\N H)\cap  \Stc_M^+ \right) \bigr]\\
\iff & (^\forall H \in\Pred_M) \bigl[\left(\False_M^+ \neq (\Sigma^\ast_M \setminus \Phi(\N H))\cap  \Stc_M^+\right) \\
& \qquad\qquad \qquad \qquad \ \&\  \left(\False_M^+ \neq (\Sigma^\ast_M \setminus \Phi_M(H))\cap  \Stc_M^+ \right) \bigr]\\
\iff & (^\forall H \in\Pred_M) \bigl[\left( \Stc_M^+ \setminus \True_M^+ \neq  \Stc_M^+ \setminus \Phi_M(\N H) \right) \\
& \qquad\qquad \qquad \qquad \ \&\ \left( \Stc_M^+ \setminus \True_M^+ \neq  \Stc_M^+ \setminus \Phi_M(H) \right) \bigr]\\
\iff & (^\forall H \in\Pred_M) \bigl[\left(\True_M^+ \neq \Phi_M(H)\cap  \Stc_M^+ \right) \\
    & \qquad \qquad \qquad \qquad \&\  \left(\True_M^+ \neq \Phi_M(\N H)\cap  \Stc_M^+ \right) \bigr]\\
\iff & (^\forall H \in\Pred_M) \left(\True_M^+  \neq \Phi_M(H)  \cap  \Stc_M^+\right)\\
\iff &  \STTarski. \qedhere
\end{align*}
\end{proof}

\begin{prop}\label{PG-NG}\leavevmode
\begin{enumerate}
    \item $\PGodel$ and $\NGodel$ are equivalent for any $\N$-Smullyan model.
    \item $\SPGodel$ and $\SNGodel$ are equivalent for any $\N$-Smullyan model.
\end{enumerate}
\end{prop}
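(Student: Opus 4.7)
The plan is to exploit the paper's earlier observation that in any $\N$-Smullyan model the symbol $\N$ behaves as classical negation, i.e., $M \models \N S$ iff $M \not\models S$ for every $M$-sentence $S$. Under this, $\PGodel$ and $\NGodel$ differ only in whether the witness is demanded to be \emph{true} or to have its negation also \emph{outside} $\Phi_M(H)$, and these become interchangeable when $\Phi_M(H) \subseteq \True_M$.

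For part (1), I first prove $\NGodel \Rightarrow \PGodel$. Given $H$ with $\Phi_M(H) \subseteq \True_M$ and an $\NGodel$ witness $S$, both $S$ and $\N S$ are $M$-sentences (since $S = H'X$ for some $H' \in \Pred_M$ yields $\N S = (\N H')X$ with $\N H' \in \Pred_M$), and exactly one of them lies in $\True_M$; that one serves as the $\PGodel$ witness. Conversely, for $\PGodel \Rightarrow \NGodel$, if $S$ witnesses $\PGodel$, then $M \models S$ forces $\N S \in \False_M$, so $\N S \notin \True_M \supseteq \Phi_M(H)$, and the same $S$ witnesses $\NGodel$.

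For part (2), the same pattern works, once I verify the auxiliary fact that $S \in \Stc_M^+$ implies $\N S \in \Stc_M^+$. This is where the requirement ($\dagger$) enters: writing $S = HX$ with $H \in \Pred_M$, the condition $S \notin \Pred_M$ forces $X \ne \epsilon$; hence $\N H \in \Pred_M$ is non-trivially extended by $X$, and ($\dagger$) applied to $\N H$ gives $\N S = (\N H)X \notin \Pred_M$, while $\N S$ is plainly in $\Stc_M$. With $\Stc_M^+$ closed under the prefixing of $\N$, the arguments of part (1) transfer: for $\SNGodel \Rightarrow \SPGodel$, one of $S, \N S$ is true and both lie in $\Stc_M^+$; for $\SPGodel \Rightarrow \SNGodel$, if $M \models S$ then $\N S \in \Stc_M^+ \setminus \True_M^+$, which together with $\Phi_M(H) \cap \Stc_M^+ \subseteq \True_M^+$ yields $\N S \notin \Phi_M(H)$.

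The main (and really only) obstacle is the closure of $\Stc_M^+$ under $\N$ in part (2); without this, substituting $\N S$ for $S$ could leave the class of non-trivial sentences. The rest of the proof is a direct reshuffling via the negation behaviour of $\N$, entirely parallel in spirit to the computation used in Proposition \ref{SFT-STT}.
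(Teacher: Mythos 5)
Your proof is correct and follows essentially the same route as the paper: the witness for $\NGodel$ (resp.\ $\SNGodel$) is obtained directly from the $\PGodel$ (resp.\ $\SPGodel$) witness using $M \models \N S \iff M \not\models S$, and conversely one of $S$, $\N S$ is selected according to its truth value. Your explicit verification via ($\dagger$) that $\Stc_M^+$ is closed under prefixing $\N$ is a detail the paper leaves implicit in its ``proved in a similar way'' remark for clause (2), and it is exactly the right point to check.
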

\begin{proof}
We only prove the first clause. 
The second clause is proved in a similar way. 
Let $M$ be any $\N$-Smullyan model. 

$(\PGodel \Rightarrow \NGodel)$: Suppose that $M$ satisfies $\PGodel$. 
Let $H$ be any $M$-predicate such that $\Phi_M(H) \subseteq \True_M$. 
By $\PGodel$, there is some $S \in \Stc_M$ such that $M \models S$ and $S \notin \Phi_M(H)$. 
Then, we have $M \not \models \N S$ and hence $\N S \notin \True_M$. 
Since $\Phi_M(H) \subseteq \True_M$, we obtain $\N S \notin \Phi_M(H)$. 
We have shown that $S$ witnesses $\NGodel$ for $M$. 

$(\NGodel \Rightarrow \PGodel)$: Suppose that $M$ satisfies $\NGodel$. 
Let $H$ be any $M$-predicate such that $\Phi_M(H) \subseteq \True_M$. 
By $\NGodel$, there is some $S \in \Stc_M$ such that $S \notin \Phi_M(H)$ and $\N S \notin \Phi_M(H)$. 
Depending on whether $M \models S$ or $M \models \N S$, we have that $S$ or $\N S$ is a witness of $\PGodel$ for $M$, respectively.
\end{proof}

\section{Non-implications}\label{non-implications}

As mentioned before, the results we presented in the previous section indicate that it suffices to analyze the relationship between the four properties $\SFTarski$, $\FTarski$, $\STTarski$, and $\TTarski$. 
The situation of the implications between the combinations of these properties is visualized in Figure \ref{Fig2}. 

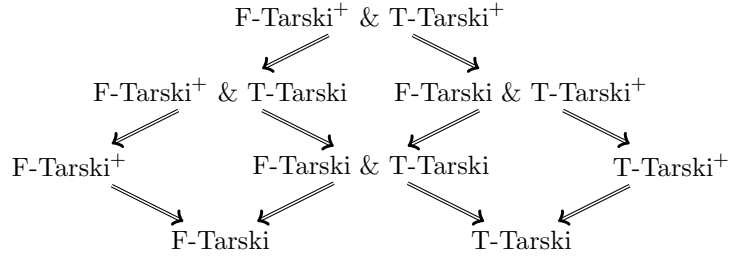
\begin{figure}[ht]
\centering
\begin{tikzpicture}
\node (SFTSTT) at (4, 3) {$\SFTarski\ \&\ \STTarski$};
\node (SFTTT) at (2, 2) {$\SFTarski\ \&\ \TTarski$};
\node (FTSTT) at (6, 2) {$\FTarski\ \&\ \STTarski$};
\node (SFT) at (0, 1) {$\SFTarski$};
\node (FTTT) at (4, 1) {$\FTarski\ \&\ \TTarski$};
\node (STT) at (8, 1) {$\STTarski$};
\node (FT) at (2, 0) {$\FTarski$};
\node (TT) at (6, 0) {$\TTarski$};

\draw [->, double] (SFTSTT)--(SFTTT);
\draw [->, double] (SFTSTT)--(FTSTT);
\draw [->, double] (SFTTT)--(SFT);
\draw [->, double] (SFTTT)--(FTTT);
\draw [->, double] (FTSTT)--(FTTT);
\draw [->, double] (FTSTT)--(STT);
\draw [->, double] (SFT)--(FT);
\draw [->, double] (FTTT)--(FT);
\draw [->, double] (FTTT)--(TT);
\draw [->, double] (STT)--(TT);
\end{tikzpicture}
\caption{Implications between the properties}\label{Fig2}
\end{figure}

In this section, we prove that in general no more arrows can be added into Figure \ref{Fig2}. 
Concretely we prove the following non-implications. 

\begin{itemize}
    \item $\SFTarski \overset{\R}{\not \Rightarrow} \TTarski$ (Proposition \ref{SFT-non-TT}). 
    \item $\STTarski \not \Rightarrow \FTarski$ (Proposition \ref{STT-non-FT}).
    \item $\FTarski \ \&\ \TTarski \overset{\N}{\not \Rightarrow} \SFTarski$ (Proposition \ref{FTTT-non-SFTSTT}).
    \item $\FTarski \ \&\ \TTarski \overset{\N}{\not \Rightarrow} \STTarski$ (Proposition \ref{FTTT-non-SFTSTT}).
    \item $\SFTarski \ \&\ \TTarski \overset{\R}{\not \Rightarrow} \STTarski$ (Proposition \ref{SFTTT-non-STT}).
    \item $\FTarski \ \&\ \STTarski \not \Rightarrow \SFTarski$ (Proposition \ref{STTFT-non-SFT}).
    \item $\FTarski \overset{\N}{\not \Rightarrow} \TTarski$ (Proposition \ref{FT-non-TT}). 
    \item $\TTarski \overset{\N}{\not \Rightarrow} \FTarski$ (Proposition \ref{TT-non-FT}). 
\end{itemize}
The relation $\overset{\R}{\not \Rightarrow}$ (resp.~$\overset{\N}{\not \Rightarrow}$) indicates that the non-implication is shown by giving a counter-model which is an $\R$-Smullyan model (resp.~$\N$-Smullyan model). 
As a consequence of these non-implications, no more arrows can be added into Figure \ref{Fig1} as well. 

Our counter-models presented in this section are restricted and tractable versions of the general Smullyan models, and we first introduce the notion of these \textit{simple models}. 
We fix a symbol $\sharp$, which is different from both $\N$ and $\R$. 

\begin{dfn}[Simple models]
A tuple $M = (\Sigma, \Phi)$ is said to be a \textit{simple model} if it satisfies the following conditions: 
\begin{itemize}
    \item $\Sigma$ is a non-empty set of symbols such that $\sharp \in \Sigma$. 
    \item Let $\Pred_M : = \{X \sharp \mid X \in \Sigma^\ast$ and $X$ does not contain $\sharp\}$. 
    \item $\Phi$ is a function $\Pred_M \rightarrow \mathcal{P}(\Sigma^\ast)$. 
\end{itemize}
\end{dfn}

It is easy to show that every simple model satisfies the requirement ($\dagger$) in the definition of Smullyan models. 
So, we have the following proposition. 

\begin{prop}
For every simple model $M = (\Sigma, \Phi)$, the triple $(\Sigma, \Pred_M, \Phi)$ is a Smullyan model. 
\end{prop}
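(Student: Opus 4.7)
The plan is to verify that the triple $(\Sigma, \Pred_M, \Phi)$ meets every clause of the definition of a Smullyan model. By construction, $\Sigma$ is a non-empty set of symbols, $\Pred_M \subseteq \Sigma^\ast$, and $\Phi \colon \Pred_M \to \mathcal{P}(\Sigma^\ast)$. Thus the only non-trivial task is to check the requirement ($\dagger$): for every $H \in \Pred_M$ and every $X \in \Sigma_M^\ast \setminus \{\epsilon\}$, one has $HX \notin \Pred_M$.

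To carry this out, I would first record the structural feature of $\Pred_M$ forced by the definition: every element of $\Pred_M$ is of the form $Y\sharp$ where $Y \in \Sigma^\ast$ contains no occurrence of $\sharp$. In particular, such a string contains exactly one occurrence of $\sharp$, and this occurrence is the final symbol. Conversely, a string belongs to $\Pred_M$ precisely when it satisfies this ``exactly one $\sharp$, at the very end'' property.

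Now fix $H \in \Pred_M$ and a non-empty $X \in \Sigma^\ast$, and write $H = Y\sharp$ with $Y$ containing no $\sharp$. Then $HX = Y\sharp X$, and since $|X| \geq 1$, this string contains a $\sharp$ at position $|Y|+1$ which is not its last position unless $X$ itself ends in $\sharp$; and if $X$ does end in $\sharp$, then $HX$ contains at least two occurrences of $\sharp$ so that the prefix preceding the final $\sharp$ still contains a $\sharp$. In either case, $HX$ fails the characterization of the previous paragraph, hence $HX \notin \Pred_M$. This establishes ($\dagger$) and completes the verification.

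There is no real obstacle here; the argument is essentially a bookkeeping check that the singleton-$\sharp$ trailing convention prevents any proper extension of a predicate from being a predicate. The only thing to be careful about is to treat both subcases of $X$ (ending in $\sharp$ or not) so as to cover all possibilities.
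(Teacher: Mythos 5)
Your verification is correct and complete: the only substantive point is requirement ($\dagger$), and your characterization of $\Pred_M$ as the strings with exactly one $\sharp$ occurring as the final symbol, together with the two-case check on whether $X$ ends in $\sharp$, settles it. The paper itself omits the argument (stating only that it ``is easy to show''), and what you wrote is exactly the routine check being alluded to.
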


Thus, in the following, we will deal with simple models as Smullyan models.
The following proposition says that for every simple model, it is very easy to determine whether a given finite string is a sentence or not. 
In this sense, simple models are easy to handle.

\begin{prop}\label{simple_stc}
For every simple model $M = (\Sigma, \Phi)$, we have
\[
    \Stc_M =  \{X \in \Sigma^\ast \mid X \ \text{contains at least one}\ \sharp\}.
\]
\end{prop}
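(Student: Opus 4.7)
The plan is to prove both inclusions directly from the definitions, using the fact that every $M$-predicate for a simple model has the form $Z\sharp$ with $Z$ free of $\sharp$.

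For the inclusion $\Stc_M \subseteq \{X \in \Sigma^\ast \mid X \text{ contains at least one } \sharp\}$, I would take an arbitrary $S \in \Stc_M$ and unfold the definition of $M$-sentence to obtain $S \equiv HX$ for some $H \in \Pred_M$ and $X \in \Sigma^\ast_M$. By the definition of $\Pred_M$ in a simple model, $H \equiv Z\sharp$ for some $Z \in \Sigma^\ast$, so $S \equiv Z\sharp X$, which obviously contains the symbol $\sharp$.

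For the reverse inclusion, I would take any $X \in \Sigma^\ast$ containing at least one $\sharp$, let $i$ be the least position at which $\sharp$ appears in $X$, and write $X \equiv Z \sharp Y$ where $Z$ is the (possibly empty) prefix of $X$ of length $i-1$ and $Y$ is the corresponding suffix. By minimality of $i$, the string $Z$ does not contain $\sharp$, so $Z\sharp \in \Pred_M$ by definition. Hence $X \equiv (Z\sharp) Y$ exhibits $X$ as an $M$-sentence.

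There is no real obstacle here; the only point requiring any care is the choice of the \emph{first} occurrence of $\sharp$ in the second direction, which is precisely what guarantees that the prefix $Z$ satisfies the $\sharp$-freeness condition in the definition of $\Pred_M$. Once the splitting is done at that position, both inclusions follow immediately and the proof is complete.
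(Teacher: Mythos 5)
Your proof is correct and is exactly the argument the paper has in mind (the paper in fact states this proposition without proof, treating it as immediate). The one point of care you identify — splitting at the \emph{first} occurrence of $\sharp$ so that the prefix is $\sharp$-free and hence $Z\sharp\in\Pred_M$ — is indeed the only nontrivial step, and you handle it correctly.
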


We are ready to give our counter-models of several implications. 

\begin{prop}\label{SFT-non-TT}
There is an $\R$-simple model which satisfies $\SFTarski$ but does not satisfy $\TTarski$.
\end{prop}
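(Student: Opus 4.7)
The plan is to take the most degenerate $\R$-simple model possible: the one whose naming function is identically empty. Concretely, set $\Sigma = \{\sharp, \R\}$ and $\Phi_M(H) = \emptyset$ for every $H \in \Pred_M$, so that $\Pred_M = \{\R^n \sharp \mid n \geq 0\}$. I expect the predicate $\sharp$ itself to name $\True_M$ and thereby refute $\TTarski$.

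The first task is to check that this is a legitimate $\R$-simple model. The only nontrivial verification is the $\R$-rule, but for every $H \in \Pred_M$ both $\Phi_M(\R H)$ and $\{K \in \Pred_M \mid KK \in \Phi_M(H)\}$ equal $\emptyset$, so the rule holds trivially. Because every $\Phi_M(H)$ is empty, no $M$-sentence can be true, giving $\True_M = \emptyset$. In particular $\Phi_M(\sharp) = \emptyset = \True_M$, which shows that $\sharp$ names $\True_M$ and hence that $\TTarski$ fails.

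For $\SFTarski$, the most direct route is to invoke Theorem \ref{THM_FPT} together with Proposition \ref{FPT-SFT}, which together yield $\SFTarski$ for every $\R$-Smullyan model. Alternatively, one can check this by hand, observing that $\True_M^+ = \emptyset$ forces $\False_M^+ = \Stc_M^+$, which is nonempty (for instance $\sharp \sharp \in \Stc_M^+$, since it contains a $\sharp$ but is not a predicate), whereas $\Phi_M(H) \cap \Stc_M^+ = \emptyset$ for every $H$; the two sides of the forbidden equality therefore cannot coincide.

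There is no substantive obstacle here: the whole content of the proposition is that weakening $\N \R$ to $\R$ alone permits such a trivially degenerate counter-model. The only small point one has to be careful about is to ensure that $\Stc_M^+$ is nonempty, so that the candidate equation $\False_M^+ = \Phi_M(H) \cap \Stc_M^+ = \emptyset$ is not satisfied vacuously.
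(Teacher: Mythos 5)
Your construction is exactly the paper's first proof of this proposition: the all-empty $\R$-simple model on $\{\R,\sharp\}$, with $\True_M=\emptyset=\Phi_M(\sharp)$ refuting $\TTarski$ and $\SFTarski$ obtained from Theorem \ref{THM_FPT} together with Proposition \ref{FPT-SFT}. The argument is correct, and your extra remark that $\Stc_M^+\neq\emptyset$ is a sensible sanity check, though not needed once you invoke those two results.
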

\begin{proof}
Let $M$ be the $\R$-simple model defined as follows:
\begin{itemize}
    \item $\Sigma_M : = \{\R, \sharp\}$,
    \item $\Phi_M(\sharp) : = \emptyset$. 
\end{itemize}
It is easy to show that $\Pred_M = \{\R^i \sharp \mid i \in \mathbb{N} \}$. 
Then, by the definition of $\R$-Smullyan models, it can be shown that $\Phi_M(H) = \emptyset$ for all $H \in \Pred_M$. 
Therefore, we obtain $\True_M = \emptyset = \Phi_M(\sharp)$. 
This means that $M$ does not satisfy $\TTarski$. 
By Theorem \ref{THM_FPT} and Proposition \ref{FPT-SFT}, $M$ satisfies $\SFTarski$. 
\end{proof}

We also give a proof of Proposition \ref{SFT-non-TT} with a counter-model having a slightly non-trivial function $\Phi$.

\begin{proof}[Alternative proof of Proposition \ref{SFT-non-TT}]
Let $M$ be the $\R$-simple model defined as follows:
\begin{itemize}
    \item $\Sigma_M : = \{\R, \sharp\}$,
    \item $\Phi_M(\sharp) : = \{\sharp^i \R \,\sharp \, \R \, \sharp \mid i \in \mathbb{N}\}$. 
\end{itemize}
As above, we have that $M$ satisfies $\SFTarski$ and $\Pred_M = \{\R^i \sharp \mid i \in \mathbb{N} \}$. 
Since the only element of $\Phi_M(\sharp)$ which is of the form $KK$ for some $M$-predicate $K$ is $\R \, \sharp \, \R \, \sharp$, we have $\Phi_M(\R \, \sharp) = \{\R \, \sharp\}$. 
Then, it is easy to see that $\Phi_M(\R^i \sharp) = \emptyset$ for all $i \geq 2$. 
Therefore, we obtain
\[
    \True_M = \{\sharp \, \sharp^{i} \R\, \sharp\, \R\, \sharp \mid i \in \mathbb{N}\} \cup \{\R\, \sharp\, \R\, \sharp\} = \{\sharp^{i} \R\, \sharp\, \R\, \sharp \mid i \in \mathbb{N}\} = \Phi_M(\sharp).
\]
This implies that $M$ does not satisfy $\TTarski$. 
\end{proof}

\begin{prop}\label{STT-non-FT}
There is a simple model which satisfies $\STTarski$ but does not satisfy $\FTarski$.
\end{prop}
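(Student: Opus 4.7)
The plan is to exhibit an explicit simple model $M$ in which the (unique) predicate $\sharp$ itself names $\False_M$, refuting $\FTarski$, while $\STTarski$ holds essentially for free because $\True_M^+$ and $\False_M^+$ are always disjoint and together cover $\Stc_M^+$. To keep the bookkeeping as light as possible, I would use the one-symbol alphabet $\Sigma_M := \{\sharp\}$, so that $\Sigma^\ast_M = \{\sharp^n : n \geq 0\}$, $\Pred_M = \{\sharp\}$, $\Stc_M = \{\sharp^n : n \geq 1\}$, and $\Stc_M^+ = \{\sharp^n : n \geq 2\}$; since $\sharp$ is then the only $M$-predicate, every question about ``some $M$-predicate'' collapses to a single check.

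Next, I would define $\Phi_M(\sharp) := \{\sharp^{2k+1} : k \geq 0\}$, the set of odd powers of $\sharp$, and compute $\True_M$ directly from the definition: $\sharp^n \in \True_M$ iff $\sharp^{n-1} \in \Phi_M(\sharp)$, which after a routine parity check gives $\True_M = \{\sharp^{2k} : k \geq 1\}$, and hence $\False_M = \Stc_M \setminus \True_M = \{\sharp^{2k+1} : k \geq 0\} = \Phi_M(\sharp)$. Consequently $\sharp$ names $\False_M$, so $\FTarski$ fails.

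For $\STTarski$ it then suffices to verify that the unique predicate $\sharp$ fails the equation $\True_M^+ = \Phi_M(\sharp) \cap \Stc_M^+$. The right-hand side equals $\False_M \cap \Stc_M^+ = \{\sharp^{2k+1} : k \geq 1\}$, while $\True_M^+ = \True_M = \{\sharp^{2k} : k \geq 1\}$ (using that $\sharp \notin \True_M$); these are disjoint and both nonempty, so $\STTarski$ holds. I do not anticipate any serious obstacle: the only design choice is picking $\Phi_M(\sharp)$ so that its ``prepend-$\sharp$'' image inside $\Stc_M$ coincides with its complement there, and the odd powers of $\sharp$ are the most economical such choice.
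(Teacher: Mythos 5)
Your proposal is correct and is essentially identical to the paper's own proof: the paper uses exactly the same simple model with $\Sigma_M = \{\sharp\}$ and $\Phi_M(\sharp) = \{\sharp^{2i+1} \mid i \in \mathbb{N}\}$, and verifies $\False_M = \Phi_M(\sharp)$ while $\True_M^+ \neq \Phi_M(\sharp) \cap \Stc_M^+$ by the same parity computation.
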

\begin{proof}
Let $M$ be the simple model defined as follows:
\begin{itemize}
    \item $\Sigma_M : = \{\sharp\}$,
    \item $\Phi_M(\sharp) : = \{\sharp^{2i+1} \mid i \in \mathbb{N}\}$. 
\end{itemize}
We have $\Pred_M = \{\sharp\}$, $\Stc_M = \{\sharp^i \mid i \geq 1\}$, $\Stc_M^+ = \{\sharp^i \mid i \geq 2\}$, $\True_M = \True_M^+ = \{\sharp^{2i+2} \mid i \in \mathbb{N}\}$, and $\False_M = \{\sharp^{2i+1} \mid i \in \mathbb{N}\}$. 
Since $\True_M^+ \neq \Phi_M(\sharp) \cap \Stc_M^+$, we have that $M$ satisfies $\STTarski$. 
On the other hand, since $\False_M = \Phi_M(\sharp)$, we get that $M$ does not satisfy $\FTarski$. 
\end{proof}

\begin{prop}\label{FTTT-non-SFTSTT}
There is an $\N$-simple model which satisfies both $\FTarski$ and $\TTarski$ but neither $\SFTarski$ nor $\STTarski$.
\end{prop}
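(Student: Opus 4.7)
The plan is to construct an $\N$-simple model $M$ with alphabet $\Sigma_M = \{\N,\sharp\}$ in which the single ``base'' predicate $\sharp$ names a set $A$ that agrees with $\True_M$ precisely on $\Stc_M^+$ but disagrees with $\True_M$ both on predicates and on non-sentences. Since the $\N$-rule forces $\Phi_M(\N\sharp) = \Sigma_M^\ast \setminus A$, this will make $H = \sharp$ witness the failure of $\STTarski$ and $H = \N\sharp$ witness the failure of $\SFTarski$, while neither $A$ nor $\Sigma_M^\ast \setminus A$ equals $\True_M$ or $\False_M$.

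Concretely, I would define $A := \Phi_M(\sharp)$ by recursion on string length. First, require that $A$ contains no non-sentence (so $\N^j \notin A$ for all $j \geq 0$, in particular $\epsilon \notin A$) and no predicate (so $\N^j\sharp \notin A$ for all $j \geq 0$). For $Y \in \Stc_M^+$, write it uniquely as $Y = \N^i \sharp X$ with $X \neq \epsilon$; since $|X| < |Y|$, membership of $X$ in $A$ has already been decided, and I declare $Y \in A$ iff ($i$ is even and $X \in A$) or ($i$ is odd and $X \notin A$). Because the $\N$-Smullyan rule yields $\Phi_M(\N^i\sharp) = A$ for even $i$ and $\Phi_M(\N^i\sharp) = \Sigma_M^\ast \setminus A$ for odd $i$, this defining clause is exactly the condition $Y \in \True_M$. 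Hence $A \cap \Stc_M^+ = \True_M^+$, so $H=\sharp$ witnesses the failure of $\STTarski$; by Proposition \ref{SFT-STT}, $\SFTarski$ fails as well (directly, $H = \N\sharp$ witnesses $(\Sigma_M^\ast \setminus A) \cap \Stc_M^+ = \False_M^+$).

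For $\TTarski$, the computation on the predicates $\N^j\sharp$ themselves is the key: their decomposition has $X = \epsilon$, and since $\epsilon \notin A$ we obtain $\N^{2j}\sharp \notin \True_M$ and $\N^{2j+1}\sharp \in \True_M$ for every $j$. But $A \cap \Pred_M = \emptyset$, so $A$ misses the true predicate $\N\sharp$, giving $A \neq \True_M$; and $\Sigma_M^\ast \setminus A \ni \epsilon$, which is not a sentence, so $\Sigma_M^\ast \setminus A \neq \True_M$. For $\FTarski$, $A$ and $\False_M$ disagree already on $\Stc_M^+$ since $\True_M^+ \neq \False_M^+$ (for instance $\sharp\sharp \in \False_M^+ \setminus \True_M^+$ and $\N\sharp\sharp \in \True_M^+ \setminus \False_M^+$, using $\sharp \notin A$), so $A \neq \False_M$; and again $\Sigma_M^\ast \setminus A$ contains the non-sentence $\epsilon$, whereas $\False_M \subseteq \Stc_M$, so $\Sigma_M^\ast \setminus A \neq \False_M$.

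The only delicate point is that the definition of $A$ is implicitly via recursion on length rather than an explicit closed form, so one has to verify that it is well-posed and check that the small boundary cases ($\sharp\sharp$, $\N\sharp\sharp$, the predicates $\N^j\sharp$) behave as claimed; this is routine bookkeeping rather than a genuine obstacle.
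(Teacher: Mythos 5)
Your construction is correct, but it is not the model the paper uses, and the difference is worth recording. The paper takes $\Phi_M(\sharp) := \{X \in \Sigma_M^\ast \mid n(X)$ is even$\}$, where $n(X)$ counts occurrences of $\N$: a closed-form set that deliberately contains non-sentences (e.g.\ $\epsilon \in \Phi_M(\N^{2i}\sharp)$ and $\N \in \Phi_M(\N^{2i+1}\sharp)$), so $\FTarski$ and $\TTarski$ hold for the uniform, trivial reason that every named set contains a non-sentence, while a short parity computation then shows $\True_M^+ = \Phi_M(\sharp) \cap \Stc_M^+$. Your model is genuinely different (for instance $\sharp\sharp$ is true in the paper's model but false in yours): you define $A = \Phi_M(\sharp)$ by recursion on length so that $A$ is exactly $\True_M^+$, which makes the failure of $\STTarski$ hold by construction rather than by computation; the price is that you must check well-foundedness of the recursion and that your verification of the weak Tarski properties is less uniform ($A \neq \True_M$ because the true predicate $\N\sharp$ lies outside $A$, $A \neq \False_M$ via a disagreement inside $\Stc_M^+$, and the complements are ruled out via the non-sentence $\epsilon$). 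All of these checks go through -- I verified in particular that $Y = \N^i\sharp X \in \True_M$ iff ($i$ even and $X \in A$) or ($i$ odd and $X \notin A$), which is precisely your defining clause, so $A = \True_M^+$ and $(\Sigma_M^\ast \setminus A) \cap \Stc_M^+ = \False_M^+$ as claimed. The paper's version buys a one-line definition and a single triviality argument for both $\FTarski$ and $\TTarski$; yours buys the identity $\Phi_M(\sharp) = \True_M^+$ for free and makes the underlying phenomenon more visible: a predicate can name exactly the true non-predicate sentences without violating $\TTarski$ or $\FTarski$, because naming and truth can still disagree on predicates and non-sentences.
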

\begin{proof}
Let $M$ be the $\N$-simple model defined as follows:
\begin{itemize}
    \item $\Sigma_M : = \{\N, \sharp\}$,
    \item For each $X \in \Sigma_M^\ast$, let $n(X)$ be the number of occurrences of $\N$ in $X$, 
    \item $\Phi_M(\sharp) : = \{X \in \Sigma_M^\ast \mid n(X)$ is even$\}$. 
\end{itemize}
We have $\Pred_M = \{\N^i \sharp \mid i \in \mathbb{N}\}$. 
For each $i \in \mathbb{N}$, it is shown that
\begin{enumerate}
    \item $\Phi_M(\N^{2i} \sharp) = \{X \in \Sigma_M^\ast \mid n(X)$ is even$\}$, 
    \item $\Phi_M(\N^{2i+1} \sharp) = \{X \in \Sigma_M^\ast \mid n(X)$ is odd$\}$. 
\end{enumerate}
Since the strings $\epsilon$ and $\N$ are not $M$-sentences, $\epsilon \in \Phi_M(\N^{2i} \sharp)$, and $\N \in \Phi_M(\N^{2i+1} \sharp)$, we obtain that $\Phi_M(H)$ coincides with neither $\False_M$ nor $\True_M$ for every $H \in \Pred_M$. 
This means that $M$ satisfies both $\FTarski$ and $\TTarski$. 

On the other hand, we have
\begin{align*}
    \True_M^+ & = (\{\N^{2i} \sharp \, X \mid i \in \mathbb{N}\ \&\ n(X)\ \text{is even}\} \\
    & \qquad \cup \{\N^{2i+1} \sharp \, X \mid i \in \mathbb{N}\ \&\ n(X)\ \text{is odd}\} ) \setminus \Pred_M \\
    & = \{X \in \Stc_M \mid n(X)\ \text{is even}\} \setminus \Pred_M \\
    & = \{X \in \Stc_M^+ \mid n(X)\ \text{is even}\} \\
    & = \Phi_M(\sharp) \cap \Stc_M^+. 
\end{align*}
Therefore, $M$ does not satisfy $\STTarski$. 
Since $M$ is an $\N$-Smullyan model, by Proposition \ref{SFT-STT}, $M$ also does not satisfy $\SFTarski$. 
\end{proof}

\begin{prop}\label{SFTTT-non-STT}
There is an $\R$-simple model which satisfies $\SFTarski$ and $\TTarski$ but does not satisfy $\STTarski$.
\end{prop}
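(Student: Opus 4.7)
The strategy is to exploit the definitional gap between $\TTarski$ and $\STTarski$: the former asks that no predicate's named set coincide with $\True_M$ over all of $\Sigma_M^\ast$, while the latter asks the same only over $\Stc_M^+$. To get the required counter-model it therefore suffices to exhibit an $\R$-simple model together with a predicate whose named set agrees with $\True_M^+$ on $\Stc_M^+$ but is distinct from $\True_M$ somewhere outside $\Stc_M^+$—either on $\Pred_M$ itself or among the non-sentences. The cleanest way to force such a discrepancy is to place a single non-sentence element in $\Phi_M(\sharp)$ that nonetheless affects whether $\sharp$ is true as a sentence.

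Concretely, I would take $\Sigma_M := \{\R, \sharp\}$ and set $\Phi_M(\sharp) := \{\epsilon\}$; the $\R$-rule then propagates to all other predicates. The key combinatorial observation is that for any $K \in \Pred_M = \{\R^i \sharp \mid i \in \mathbb{N}\}$, the string $KK = \R^i \sharp \R^i \sharp$ contains two occurrences of $\sharp$ and so cannot lie in $\Pred_M$ (by Proposition \ref{simple_stc} combined with the form of predicates in a simple model). Together with $\epsilon \notin \Pred_M$, this gives $\Phi_M(\R \sharp) = \emptyset$, and then by a short induction $\Phi_M(\R^i \sharp) = \emptyset$ for every $i \geq 1$. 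Reading off $\True_M$ is now routine: among the predicates, only $\sharp$ is true (because $\sharp \equiv \sharp \epsilon$ and $\epsilon \in \Phi_M(\sharp)$); and no element of $\Stc_M^+$ is true, since a true $\sharp X$ would need $X \in \{\epsilon\}$ (forcing $X = \epsilon$) and a true $\R^i \sharp X$ with $i \geq 1$ would need $X \in \emptyset$. Hence $\True_M = \{\sharp\}$ and $\True_M^+ = \emptyset$.

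From this the three required properties fall out. Since $\epsilon \notin \Stc_M^+$, we have $\Phi_M(\sharp) \cap \Stc_M^+ = \emptyset = \True_M^+$, so $\sharp$ witnesses the failure of $\STTarski$. On the other hand, $\True_M = \{\sharp\}$ is neither $\{\epsilon\}$ nor $\emptyset$, so no $M$-predicate names $\True_M$ and $\TTarski$ holds. Finally, $\SFTarski$ comes for free from Theorem \ref{THM_FPT} and Proposition \ref{FPT-SFT}, because $M$ is an $\R$-Smullyan model. The main technical point is to confirm that the $\R$-rule does not secretly introduce true non-predicate sentences that would break $\Phi_M(\sharp) \cap \Stc_M^+ = \True_M^+$; the whole model is engineered so that this collapse happens because $KK$ is never a predicate when $K$ is, which is exactly the feature that keeps $\Phi_M(\R^i \sharp)$ empty for $i \geq 1$.
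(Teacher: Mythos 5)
Your proposal is correct and follows the paper's strategy exactly: an $\R$-simple model over $\{\R,\sharp\}$ in which $\SFTarski$ comes for free from Theorem \ref{THM_FPT} and Proposition \ref{FPT-SFT}, $\Phi_M(\sharp)\cap\Stc_M^+$ coincides with $\True_M^+$ (in your model both are empty), and $\True_M$ differs from every $\Phi_M(H)$; the only difference is that you take the degenerate witness $\Phi_M(\sharp)=\{\epsilon\}$ where the paper uses an infinite set that makes $\True_M^+$ nontrivial. One small slip in the justification: the reason $\Phi_M(\R\,\sharp)=\emptyset$ is that $KK\neq\epsilon$ for every $K\in\Pred_M$, i.e.\ $KK\notin\Phi_M(\sharp)=\{\epsilon\}$ --- the observation that $KK\notin\Pred_M$ is true but is not what the $\R$-rule asks you to check.
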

\begin{proof}
Let $M$ be the $\R$-simple model defined as follows:
\begin{itemize}
    \item $\Sigma_M : = \{\R, \sharp\}$,
    \item $\Phi_M(\sharp) : = \{\sharp^{i}, \sharp^i \R \, \sharp \, \sharp \mid i \in \mathbb{N}\}$. 
\end{itemize}
We have that $M$ satisfies $\SFTarski$ and $\Pred_M = \{\R^j \sharp \mid j \in \mathbb{N}\}$. 
Since the only element of $\Phi_M(\sharp)$ which is of the form $KK$ for some $K \in \Pred_M$ is $\sharp\, \sharp$, we have $\Phi_M(\R \, \sharp) = \{\sharp\}$. 
Then, it is easy to see that $\Phi_M(\R^j \sharp) = \emptyset$ for all $j \geq 2$. 
Therefore, we get
\[
    \True_M = \{\sharp^{i+1}, \sharp^{i+1} \R \, \sharp \, \sharp \mid i \in \mathbb{N}\} \cup \{\R\, \sharp \, \sharp\} = \{\sharp^{i+1}, \sharp^{i} \R \, \sharp \, \sharp \mid i \in \mathbb{N}\}.
\]
Since $\True_M \neq \Phi_M(H)$ for all $H \in \Pred_M$, we have that $M$ satisfies $\TTarski$. 
On the other hand, since $\epsilon, \sharp \notin\Stc_M^+$, we get $\True_M^+ = \Phi_M(\sharp) \cap \Stc_M^+$. 
Hence, $M$ does not satisfy $\STTarski$. 
\end{proof}

\begin{prop}\label{STTFT-non-SFT}
There is a simple model which satisfies $\STTarski$ and $\FTarski$ but does not satisfy $\SFTarski$.
\end{prop}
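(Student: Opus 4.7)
The plan is to construct a very small simple model in the style of the counter-model used in Proposition \ref{STT-non-FT}, adjusted so that the only way $\FTarski$ can fail is through the predicate-sentence $\sharp$ itself. The guiding intuition is that $\SFTarski$ and $\FTarski$ agree on the behaviour of $\Phi_M(H)$ on $\Stc_M^+$, but $\SFTarski$ ignores what happens on $\Pred_M$ and on non-sentences. So I want a predicate $H$ whose named set matches $\False_M^+$ when intersected with $\Stc_M^+$, while $\Phi_M(H)$ and $\False_M$ disagree on at least one element of $\Pred_M$, ensuring $\Phi_M(H) \neq \False_M$. Since predicates themselves always lie in $\Stc_M \setminus \Stc_M^+$, the obvious place to create this asymmetry is at the predicate $\sharp$ itself.

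Concretely, I take the simple model with $\Sigma_M := \{\sharp\}$ and $\Phi_M(\sharp) := \{\sharp^{2i+2} \mid i \in \mathbb{N}\}$. Since $\Sigma_M$ is a singleton, the definition of simple model gives $\Pred_M = \{\sharp\}$, $\Stc_M = \{\sharp^i \mid i \geq 1\}$ by Proposition \ref{simple_stc}, and $\Stc_M^+ = \{\sharp^i \mid i \geq 2\}$. Using the equivalence $\sharp^i \in \True_M \iff \sharp^{i-1} \in \Phi_M(\sharp)$, I compute $\True_M = \True_M^+ = \{\sharp^{2i+1} \mid i \geq 1\}$ and $\False_M = \{\sharp\} \cup \{\sharp^{2i+2} \mid i \geq 0\}$, so that $\False_M^+ = \{\sharp^{2i+2} \mid i \geq 0\}$.

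Reading off these descriptions, $\Phi_M(\sharp) \cap \Stc_M^+ = \False_M^+$, witnessing that $\SFTarski$ fails. On the other hand $\sharp \in \False_M \setminus \Phi_M(\sharp)$, so $\Phi_M(\sharp) \neq \False_M$; since $\sharp$ is the only predicate, $\FTarski$ holds. Finally $\True_M^+$ consists of odd powers of $\sharp$ while $\Phi_M(\sharp) \cap \Stc_M^+$ consists of even ones, so $\True_M^+ \neq \Phi_M(\sharp) \cap \Stc_M^+$, and again because $\sharp$ is the only predicate, $\STTarski$ holds.

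There is no substantive obstacle here: once one observes that the slack between $\SFTarski$ and $\FTarski$ lies precisely in the predicate-sentences, the construction is essentially forced, and the verification reduces to elementary arithmetic on powers of $\sharp$. The only delicate point is remembering that $\sharp$ itself belongs to $\False_M$ but not to $\Stc_M^+$, which is exactly what enables $\Phi_M(\sharp) \cap \Stc_M^+ = \False_M^+$ to coexist with $\Phi_M(\sharp) \neq \False_M$.
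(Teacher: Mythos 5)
Your proof is correct and follows essentially the same route as the paper's: a singleton-alphabet simple model in which $\sharp$ names the even powers of $\sharp$. The only difference is cosmetic --- the paper takes $\Phi_M(\sharp) = \{\sharp^{2i} \mid i \in \mathbb{N}\}$ and defeats $\False_M = \Phi_M(\sharp)$ via the non-sentence $\epsilon \in \Phi_M(\sharp)$, whereas you omit $\epsilon$ and defeat it via the predicate $\sharp \in \False_M \setminus \Phi_M(\sharp)$; both exploit the same slack outside $\Stc_M^+$.
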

\begin{proof}
Let $M$ be the simple model defined as follows:
\begin{itemize}
    \item $\Sigma_M : = \{\sharp\}$,
    \item $\Phi_M(\sharp) : = \{\sharp^{2i} \mid i \in \mathbb{N}\}$. 
\end{itemize}
We have that $\Pred_M = \{\sharp\}$, $\Stc_M = \{\sharp^i \mid i \geq 1\}$, $\Stc_M^+ = \{\sharp^i \mid i\geq 2\}$, $\True_M^+ = \{\sharp^{2i+1} \mid i \geq 1\}$, and $\False_M = \False_M^+ = \{\sharp^{2i+2}\mid i \in \mathbb{N}\}$. Since $\True_M^+ \neq \Phi_M(\sharp) \cap \Stc_M^+$, we have that $M$ satisfies $\STTarski$. 
Since $\epsilon \in \Phi_M(\sharp)$ and $\epsilon \notin \Stc_M$, we have $\False_M \neq \Phi_M(\sharp)$, that is, $M$ satisfies $\FTarski$. 
On the other hand, we get $\False_M^+ = \Phi_M(\sharp) \cap \Stc_M^+$. 
Hence, $M$ does not satisfy $\SFTarski$.
\end{proof}

\begin{prop}\label{FT-non-TT}
There is an $\N$-simple model which satisfies $\FTarski$ but does not satisfy $\TTarski$.
\end{prop}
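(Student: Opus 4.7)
The plan is to build an $\N$-simple model over the alphabet $\Sigma_M = \{\N, \sharp\}$. In such a model $\Pred_M = \{\N^i \sharp : i \in \mathbb{N}\}$, and the $\N$-Smullyan rule forces $\Phi_M(\N^i \sharp) = \Phi_M(\sharp)$ when $i$ is even and $\Phi_M(\N^i \sharp) = \Sigma^\ast_M \setminus \Phi_M(\sharp)$ when $i$ is odd. Consequently only two subsets of $\Sigma^\ast_M$ are ever named by an $M$-predicate, namely $A := \Phi_M(\sharp)$ and its complement. My strategy is to choose $A$ so that $A = \True_M$: this makes $\True_M$ named by $\sharp$ (refuting $\TTarski$), while forcing $\False_M$ to remain unnamed.

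Since $\True_M$ itself depends on $\Phi_M$, the requirement $A = \True_M$ is a fixed-point equation. I would solve it by defining $A$ by recursion on the length of $Y \in \Sigma^\ast_M$: put $Y \in A$ iff $Y \in \Stc_M$ and, decomposing $Y = \N^i \sharp X$ with $i$ maximal, either $i$ is even and $X \in A$, or $i$ is odd and $X \notin A$. The recursion is well-founded because $X$ is strictly shorter than $Y$, and it bottoms out at strings containing no $\sharp$, which are automatically excluded from $A$. An equivalent closed-form description is
\[
A = \{Y \in \Stc_M : \text{the number of occurrences of } \N \text{ preceding the last } \sharp \text{ in } Y \text{ is odd}\}.
\]
Setting $\Phi_M(\sharp) := A$ and extending to all of $\Pred_M$ by the $\N$-Smullyan rule yields the desired $\N$-simple model.

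A routine induction on the length of $Y$ then establishes $\True_M = A$: for $Y = \N^i \sharp X \in \Stc_M$, the condition $Y \in \True_M$ unfolds to $X \in \Phi_M(\N^i \sharp)$, which after applying the parity formula for $\Phi_M(\N^i \sharp)$ matches exactly the recursion defining $A$. Hence $\sharp$ names $\True_M$, so $\TTarski$ fails. For $\FTarski$, only two candidate named sets need to be excluded: $A = \True_M$ is disjoint from $\False_M$ and nonempty (for instance $\N \sharp \in A$), so $A \neq \False_M$; and $\Sigma^\ast_M \setminus A$ contains the non-sentence $\epsilon$, whereas $\False_M \subseteq \Stc_M$, so $\Sigma^\ast_M \setminus A \neq \False_M$ either. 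The principal technical obstacle is verifying the fixed-point identity $\True_M = A$, which amounts to matching the recursion for $A$ with the parity-based formula for $\Phi_M(\N^i \sharp)$; the remaining arguments are bookkeeping.
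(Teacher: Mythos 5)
Your proposal is correct and constructs essentially the same model as the paper: your closed-form set $A$ (sentences with an odd number of $\N$'s before the last $\sharp$) is exactly the paper's $\Phi_M(\sharp) = \{X\,\sharp\,\N^i \mid n(X) \text{ odd}\}$, and both verifications proceed by checking $\True_M = \Phi_M(\sharp)$ and ruling out $\False_M$ via the non-sentences $\N^i$ lying in the odd-level named sets. The only difference is presentational: you derive $A$ as the solution of the fixed-point equation $A = \True_M$ by recursion on string length, whereas the paper writes the set down directly and verifies it.
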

\begin{proof}
Let $M$ be the $\N$-simple model defined as follows:
\begin{itemize}
    \item $\Sigma_M : = \{\N, \sharp\}$,
    \item $\Phi_M(\sharp) : = \{X \, \sharp\, \N^i \mid i \in \mathbb{N}$ and $n(X)$ is odd$\}$, where $n(X)$ is defined as in the proof of Proposition \ref{FTTT-non-SFTSTT}. 
\end{itemize}
We have $\Pred_M = \{\N^i \sharp \mid i \in \mathbb{N}\}$, $\Stc_M = \{X \, \sharp \, \N^i \mid i \in \mathbb{N}$ and $X \in \Sigma^\ast_M\}$, and $\Sigma^\ast_M \setminus \Stc_M = \{\N^i \mid i \in \mathbb{N}\}$. 
It is shown that for each $j \in \mathbb{N}$,
\begin{enumerate}
    \item $\Phi_M(\N^{2j} \sharp) = \{X \, \sharp\, \N^i \mid i \in \mathbb{N}$ and $n(X)$ is odd$\}$, 
    \item $\Phi_M(\N^{2j+1} \sharp) = \{\N^i, X \, \sharp\, \N^i \mid i \in \mathbb{N}$ and $n(X)$ is even$\}$. 
\end{enumerate}

We have 
\begin{align*}
    \True_M & = \{\N^{2j} \sharp \, X \, \sharp\, \N^i \mid i,j \in \mathbb{N}\ \&\ n(X)\ \text{is odd}\}\\
        & \qquad  \cup \{\N^{2j+1} \sharp \, \N^i, \N^{2j+1} \sharp \, X  \,  \sharp \, \N^i \mid i, j \in \mathbb{N}\ \&\ n(X)\ \text{is even}\}\\
    & = \{X \, \sharp \, \N^i \mid i \in \mathbb{N}\ \text{and}\ n(X)\ \text{is odd}\}\\
    & = \Phi_M(\sharp). 
\end{align*}
Therefore, $M$ does not satisfy $\TTarski$. 

Also, since
\[
    \False_M = \{X \, \sharp \, \N^i \mid i \in \mathbb{N}\ \text{and}\ n(X)\ \text{is even}\}, 
\]
it is easy to see that for every $H \in \Pred_M$, we have $\False_M \neq \Phi_M(H)$. 
This means that $M$ satisfies $\FTarski$. 
\end{proof}

\begin{prop}\label{TT-non-FT}
There is an $\N$-simple model which satisfies $\TTarski$ but does not satisfy $\FTarski$.
\end{prop}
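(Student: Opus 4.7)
The plan is to mirror the construction in Proposition~\ref{FT-non-TT} with a small but essential modification that puts $\False_M$ (rather than $\True_M$) into the image of $\Phi_M$. I will take the $\N$-simple model $M$ with $\Sigma_M = \{\N, \sharp\}$ and, using $n(X)$ to denote the number of occurrences of $\N$ in $X$, define
\[
\Phi_M(\sharp) := \{\N^i \mid i \in \mathbb{N}\} \cup \{X\sharp\N^i \mid i \in \mathbb{N},\ n(X) \text{ is even}\}.
\]
The guiding idea is that by placing every non-sentence $\N^i$ into $\Phi_M(\sharp)$ from the beginning, the truth value of every $M$-sentence $\N^k\sharp Y$ will reduce, uniformly in $Y$, to a single parity condition on the prefix of the sentence before its last $\sharp$.

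My first step is to use the $\N$-Smullyan axiom to compute $\Phi_M(\N^{2j}\sharp) = \Phi_M(\sharp)$ and $\Phi_M(\N^{2j+1}\sharp) = \{X\sharp\N^i \mid n(X) \text{ is odd}\}$. Next, for each $M$-sentence $S = \N^k\sharp Y$ I split into the cases $Y = \N^j$ (not an $M$-sentence) and $Y = Z\sharp\N^j$, evaluating in each case whether $Y$ lies in $\Phi_M(\N^k\sharp)$. A routine parity calculation in each case shows that, writing $S$ uniquely as $W\sharp\N^j$ with $\sharp$ the last occurrence of $\sharp$ in $S$, one has $M \models S$ iff $n(W)$ is even. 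Hence
\[
\True_M = \{W\sharp\N^j \mid n(W) \text{ is even}\}, \qquad \False_M = \{W\sharp\N^j \mid n(W) \text{ is odd}\}.
\]

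To finish, I will observe that $\Phi_M(\N\sharp) = \{X\sharp\N^i \mid n(X) \text{ is odd}\}$ coincides exactly with $\False_M$, so $M$ does not satisfy $\FTarski$. For $\TTarski$: when $k$ is even, $\Phi_M(\N^k\sharp) = \Phi_M(\sharp)$ contains $\epsilon$ (and the other strings $\N^i$), none of which lie in $\True_M$; when $k$ is odd, $\Phi_M(\N^k\sharp) = \False_M$ is disjoint from $\True_M$, and $\True_M$ is nonempty (for example $\sharp\sharp \in \True_M$, since its prefix before the last $\sharp$ is $\sharp$ with $n = 0$). Thus no $M$-predicate names $\True_M$.

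The main subtlety is that a naive swap of parities in the construction of Proposition~\ref{FT-non-TT} does \emph{not} work: in that construction, single-$\sharp$ sentences (where the argument $Y = \N^j$ is not an $M$-sentence) and multi-$\sharp$ sentences interact with $\Phi_M(\sharp)$ via different paths in the case analysis, and a direct parity flip yields a candidate $\False_M$ that splits across two parity classes rather than being a single one. Including every string $\N^i$ in $\Phi_M(\sharp)$ from the start repairs this asymmetry in the case $Y = \N^j$ with $k$ odd, so that both cases produce the same parity condition on $n(W)$; once this is achieved, the rest of the verification is straightforward bookkeeping analogous to the previous proposition.
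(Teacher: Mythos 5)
Your proof is correct, and I checked the key computation: with $\Phi_M(\sharp) = \{\N^i \mid i \in \mathbb{N}\} \cup \{X\sharp\N^i \mid n(X)\text{ even}\}$, writing each sentence uniquely as $W\sharp\N^j$ with the displayed $\sharp$ its last occurrence, all four branches of the case analysis (prefix $\N^k\sharp$ with $k$ even/odd, argument a string $\N^j$ or a sentence) do land on the single condition ``true iff $n(W)$ even,'' so $\False_M = \{W\sharp\N^j \mid n(W)\text{ odd}\} = \Phi_M(\N\sharp)$, while $\TTarski$ holds for the reasons you give ($\epsilon \in \Phi_M(\N^{2j}\sharp)$, and $\Phi_M(\N^{2j+1}\sharp) = \False_M$ is disjoint from the nonempty $\True_M$). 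This is the same strategy as the paper's proof --- an $\N$-simple model over $\{\N,\sharp\}$ with a parity-defined $\Phi_M(\sharp)$ --- but your choice of $\Phi_M(\sharp)$ differs from the paper's, which takes $\{\N^i \mid i\in\mathbb{N}\} \cup \{X\sharp\N^i \mid n(X)\text{ odd}\}$, i.e.\ the opposite parity on the sentence part. Your closing caveat about the failure of a naive parity flip is not merely a stylistic remark: it appears to apply to the paper's own choice. There, the sentence $\sharp \equiv \sharp\,\epsilon$ is true (since $\epsilon \equiv \N^0 \in \Phi_M(\sharp)$) yet does not belong to $\{X\sharp\N^i \mid n(X)\text{ odd}\}$, and more generally the single-$\sharp$ sentences $\N^{2j}\sharp\N^i$ come out true with an even prefix while the multi-$\sharp$ true sentences have an odd prefix; consequently the paper's simplification of $\True_M$ to one parity class, and the resulting identity $\False_M = \Phi_M(\N\sharp)$, do not go through as written (e.g.\ $\sharp \in \Phi_M(\N\sharp)$ but $\sharp \notin \False_M$). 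Pairing the unconditional inclusion of all $\N^i$ with the \emph{even} parity, as you do, is exactly what synchronizes the two cases, so your version is the one I would keep.
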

\begin{proof}
Let $M$ be the $\N$-simple model defined as follows:
\begin{itemize}
    \item $\Sigma_M : = \{\N, \sharp\}$,
    \item $\Phi_M(\sharp) : = \{\N^i, X \, \sharp\, \N^i \mid i \in \mathbb{N}$ and $n(X)$ is odd$\}$. 
\end{itemize}
We have $\Pred_M = \{\N^i \sharp \mid i \in \mathbb{N}\}$, $\Stc_M = \{X \, \sharp \, \N^i \mid i \in \mathbb{N}$ and $X \in \Sigma^\ast_M\}$, and $\Sigma^\ast_M \setminus \Stc_M = \{\N^i \mid i \in \mathbb{N}\}$. 
It is shown that for each $j \in \mathbb{N}$,
\begin{enumerate}
    \item $\Phi_M(\N^{2j} \sharp) = \{\N^i, X \, \sharp\, \N^i \mid i \in \mathbb{N}$ and $n(X)$ is odd$\}$, 
    \item $\Phi_M(\N^{2j+1} \sharp) = \{X \, \sharp\, \N^i \mid i \in \mathbb{N}$ and $n(X)$ is even$\}$. 
\end{enumerate}

We have 
\begin{align*}
    \True_M & = \{\N^{2j} \sharp \, \N^i, \N^{2j} \sharp \, X \, \sharp\, \N^i \mid i,j \in \mathbb{N}\ \&\ n(X)\ \text{is odd}\}\\
        & \qquad  \cup \{\N^{2j+1} \sharp \, X  \,  \sharp \, \N^i \mid i, j \in \mathbb{N}\ \&\ n(X)\ \text{is even}\}\\
    & = \{X \, \sharp \, \N^i \mid i \in \mathbb{N}\ \text{and}\ n(X)\ \text{is odd}\}.
\end{align*}
Since it is easy to see that $\True_M \neq \Phi_M(H)$ for every $H \in \Pred_M$, we have that $M$ satisfy $\TTarski$. 

On the other hand, since
\[
    \False_M = \{X \, \sharp \, \N^i \mid i \in \mathbb{N}\ \text{and}\ n(X)\ \text{is even}\} = \Phi_M(\N \, \sharp), 
\]
we obtain that $M$ does not satisfies $\FTarski$. 
\end{proof}

\section{Smullyan models based on arithmetic}\label{arithmetic}

In this section, we discuss the property of first-order arithmetic through the analysis of Smullyan models. 
We construct two specific Smullyan models $M_{\mathbb{N}}$ and $M_{\PA}$ based on arithmetic and we then discuss the correspondence between the properties of these models and those concerning truth and provability in arithmetic.

Let $\mathcal{L}_A$ be the language of first-order arithmetic. 
We may assume that $\mathcal{L}_A$ contains the corresponding function symbol for each primitive recursive function. 
Let $\mathbb{N}$ denote the $\mathcal{L}_A$-structure that is the standard model of first-order arithmetic. 
Peano arithmetic $\PA$ is the $\mathcal{L}_A$-theory consisting of basic axioms of arithmetic and the axiom scheme of induction. 
For each $n \in \mathbb{N}$, let $\overline{n}$ denote the canonical closed $\mathcal{L}_A$-term whose value is $n$. 
We fix a natural G\"odel numbering of symbols and formulas in $\mathcal{L}_A$ and let $\gn{\varphi}$ denote the G\"odel number of an $\mathcal{L}_A$-formula $\varphi$. 
For a variable $v$, we say that an $\mathcal{L}_A$-formula $\varphi$ is a \textit{$v$-formula} if $\varphi$ contains no free variables other than $v$. 
A $v$-formula having the G\"odel number $i$ is denoted by $\varphi_i(v)$. 
Also, let $\Gamma : = \{i \in \mathbb{N} \mid i$ is the G\"odel number of some $v$-formula$\}$. 
Let $d$ be the primitive recursive function satisfying the following equality: 
\begin{align*}
d(i) = 
\begin{cases}
\gn{\varphi_i(\num{i})} & \text{if } i \in \Gamma,\\
0 & \text{if}\ i \notin \Gamma.
\end{cases}
\end{align*}
It is allowed that a $v$-formula $\varphi_i$ may contain no free variables. 
In this case, we have $d(i) = \gn{\varphi_i}$. 
As noted above, we may assume that $\PA$ has the function symbol $d$ corresponding to this primitive recursive function. 
For the sake of simplicity, for each $\mathcal{L}_A$-formula $\psi$, we will identify $\num{\gn{\psi}}$ and $\gn{\psi}$ in this section.

\subsection{Arithmetic-based Smullyan frames}

We prepare an infinite sequence $\{a_i \mid i \in \mathbb{N}\}$ of fresh symbols. 
We are ready to bring arithmetic into Smullyan models. 
In the next two subsections, we will define two Smullyan models $M_{\mathbb{N}}$ and $M_{\PA}$ based on the standard model $\mathbb{N}$ of arithmetic and Peano arithmetic $\PA$, respectively. 
For this purpose, in this subsection, we introduce two frames $F_{\R}$ and $F_{\N \R}$ based on arithmetic as a small step.



\begin{dfn}[Arithmetic-based Smullyan frames]\leavevmode
\begin{itemize}
    \item $\Sigma_{\R} : = \{a_i \mid i \in \Gamma\} \cup \{\R\}$.
    \item $\Pred_{\R} = \{\R^j a_i \mid j \in \mathbb{N}$ and $i \in \Gamma\}$. 
    \item $\Sigma_{\N \R} : = \{a_i \mid i \in \Gamma\}\cup \{\N, \R\}$.
    \item $\Pred_{\N \R} : = \{X a_i \mid X \in \{\N, \R\}^\ast$ and $i \in \Gamma\}$. 
\end{itemize}
\begin{enumerate}
    \item The tuple $F_{\R} : = (\Sigma_{\R}, \Pred_{\R})$ is called the \textit{arithmetic-based $\R$-Smullyan frame}. 
    \item The tuple $F_{\N \R} : = (\Sigma_{\N \R}, \Pred_{\N \R})$ is called the \textit{arithmetic-based $\N \R$-Smullyan frame}. 
\end{enumerate}    
\end{dfn}

In the following, $F_A$ is assumed to denote $F_{\R}$ or $F_{\N \R}$. 
Here, the subscript $A$ stands for `arithmetic'. 
If $F_A$ is $F_\R$, then $\Sigma_{F_A}$ and $\Pred_{F_A}$ denote $\Sigma_{\R}$ and $\Pred_{\R}$, respectively. 
Similarly, if $F_A$ is $F_{\N \R}$, then $\Sigma_{F_A}$ and $\Pred_{F_A}$ denote $\Sigma_{\N \R}$ and $\Pred_{\N \R}$, respectively. 
The following proposition easily follows from the definitions of $\Pred_{\R}$ and $\Pred_{\N \R}$. 

\begin{prop}\label{AB-models}
For any function $\Phi : \Pred_{F_A} \rightarrow \mathcal{P}(\Sigma_{F_A}^\ast)$, we have that $(F_A, \Phi) = (\Sigma_{F_A}, \Pred_{F_A}, \Phi)$ is a Smullyan model. 
\end{prop}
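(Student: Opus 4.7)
The plan is straightforward: the first two clauses in the definition of a Smullyan model hold immediately by construction (the symbol set $\Sigma_{F_A}$ is nonempty, and $\Pred_{F_A} \subseteq \Sigma_{F_A}^\ast$), and the third clause (that $\Phi$ is a function $\Pred_{F_A} \to \mathcal{P}(\Sigma_{F_A}^\ast)$) is given by hypothesis. Hence the only thing requiring verification is the requirement ($\dagger$), namely that for every $H \in \Pred_{F_A}$ and every non-empty $X \in \Sigma_{F_A}^\ast$, the string $HX$ is not in $\Pred_{F_A}$.

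The key structural observation, which applies uniformly in both cases $F_A = F_{\R}$ and $F_A = F_{\N\R}$, is that each element of $\Pred_{F_A}$ contains exactly one symbol from the set $\{a_i \mid i \in \Gamma\}$, and that symbol appears as its final character. For $F_{\R}$ this is visible from the form $\R^j a_i$ with $j \in \mathbb{N}$ and $i \in \Gamma$; for $F_{\N\R}$ this is visible from the form $Y a_i$ with $Y \in \{\N, \R\}^\ast$ and $i \in \Gamma$ (since $Y$ contains no $a$-symbols).

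Given this, I would argue as follows. Fix $H \in \Pred_{F_A}$ and $X \in \Sigma_{F_A}^\ast \setminus \{\epsilon\}$. Write $H = H' a_i$ for a unique $i \in \Gamma$, where $H'$ is a string over $\{\R\}$ or $\{\N, \R\}$ depending on the case. Then $HX = H' a_i X$, and since $X$ is non-empty the occurrence of $a_i$ in $HX$ is not its last symbol. Consequently $HX$ cannot match the defining form of an element of $\Pred_{F_A}$, which permits an $a$-symbol only at the end; indeed either the last symbol of $HX$ lies in $\{\N, \R\}$, or $X$ itself contains some $a_{i'}$ and $HX$ then has at least two $a$-symbols. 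In either situation, $HX \notin \Pred_{F_A}$, so ($\dagger$) holds. There is no real obstacle here; the proof is pure structural bookkeeping on the shape of strings in $\Pred_{F_A}$.
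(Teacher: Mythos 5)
Your proof is correct and matches the paper's intent: the paper simply asserts that the proposition ``easily follows from the definitions of $\Pred_{\R}$ and $\Pred_{\N\R}$'' without spelling out the details. Your verification of requirement ($\dagger$) via the observation that every predicate contains exactly one $a_i$, occurring as its last symbol, is exactly the bookkeeping the paper leaves implicit.
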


Notice that for each Smullyan model $M$, the set of all $M$-sentences is determined only by its frame $(\Sigma_M, \Pred_M)$. 
So, we write the set of all sentences determined by the frame $F_A$ as $\Stc_{F_A}$. 

    

To associate our arithmetic-based Smullyan frames with arithmetic, we introduce two mappings $I_{F_A}$ and $J_{F_A}$.
First, we introduce the mapping $I_{F_A}$ which assigns a $v$-formula to every $F_A$-predicate. 

\begin{dfn}[The mapping $I_{F_A}$]\label{def:I}
The mapping $I_{F_A}$ from $\Pred_{F_A}$ to the set of all $v$-formulas is defined as follows:
\begin{align*}
    I_{F_A}(X a_i) : \equiv 
    \begin{cases}
        \neg^k \, \varphi_i(v) & \text{if}\ ^\exists k \geq 0\, (X \equiv \N^k), \\
        \neg^{k+l} \, \varphi_i(d(v)) & \text{if}\ ^\exists k, l \geq 0\, (X \equiv \N^k \R \N^l), \\
        \neg^{k+l} \, \bot & \text{if}\ ^\exists k, l \geq 0\, ^\exists Y \, (X \equiv \N^k \R \N^l \R Y). 
    \end{cases}
\end{align*}
Here $\neg^k$ is the abbreviation for $\underbrace{\neg \cdots \neg}_{k}$. 
In particular, the definition of the mapping $I_{F_\R}$ is simply rewritten as follows: 
\begin{align*}
    I_{F_{\R}}(X a_i) : \equiv 
    \begin{cases}
        \varphi_i(v) & \text{if}\ X \equiv \epsilon, \\
        \varphi_i(d(v)) & \text{if}\ X \equiv \R, \\
        \bot & \text{if}\ ^\exists j \geq 2\, (X \equiv \R^j). 
    \end{cases}
\end{align*}
\end{dfn}


Next, we define the mapping $J_{F_A}$ which assigns an $\mathcal{L}_A$-sentence to each $F_A$-sentence. 

\begin{dfn}[The mapping $J_{F_A}$]\label{def:J}
The mapping $J_{F_A}$ from $\Stc_{F_A}$ to a set of $\mathcal{L}_A$-sentences is inductively defined as follows: 
Let $H \in \Pred_{F_A}$ and $X \in \Sigma_{F_A}^\ast$. 

\begin{enumerate}
 \item For $X \in \Pred_{F_A}$, $J_{F_A}(H X) : \equiv I_{F_A}(H)(\gn{I_{F_A}(X)(v)})$. 
 
 \item For $X \notin \Stc_{F_A}$,  
 \begin{itemize}
    \item $J_{F_A}(a_i X) : \equiv I_{F_A}(a_i)(0)$,
    \item $J_{F_A}(\R H X) : \equiv \bot$,
    \item $J_{F_A}(\N H X) : \equiv \lnot J_{F_A}(H X)$.
\end{itemize}

 
 \item For $X \in \Stc_{F_A}^+$, 
 \begin{itemize}
    \item $J_{F_A}(a_i X) : \equiv I_{F_A}(a_i)(\gn{J_{F_A}(X)})$,
    \item $J_{F_A}(\R H X) : \equiv \bot$, 
    \item $J_{F_A}(\N H X) : \equiv \lnot J_{F_A}(H X)$. 
\end{itemize}
In each of the cases of $X \notin \Stc_{F_A}$ and $X \in \Stc_{F_A}^+$, the third bullet point is applied only when $F_A$ is $F_{\N \R}$.
\end{enumerate}
\end{dfn}

For the arithmetic-based $\N \R$-Smullyan frame $F_{\N \R}$, we show that the symbol $\N$ plays the role of $\neg$ in arithmetic through the mappings $I_{F_{\N \R}}$ and $J_{F_{\N \R}}$.

\begin{prop}\label{ab_N}
Suppose $F_A = F_{\N \R}$. 
\begin{enumerate}
    \item For any $H \in \Pred_{F_A}$, we have $I_{F_A}(\N H) \equiv \neg I_{F_A}(H)$.
    \item For any $S \in \Stc_{F_A}$, we have $J_{F_A}(\N S) \equiv \lnot J_{F_A}(S)$. 
\end{enumerate}
\end{prop}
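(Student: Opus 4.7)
The plan is to prove part (1) by direct case analysis on the form of $H$, and then to deduce part (2) by a short case analysis that follows the three clauses of Definition~\ref{def:J}, using part (1) only in the base case.

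For part (1), write $H \equiv X a_i$ with $X \in \{\N, \R\}^\ast$ and $i \in \Gamma$. Then $\N H \equiv (\N X) a_i$. The three clauses of Definition~\ref{def:I} are governed by whether $X$ has the form $\N^k$, $\N^k \R \N^l$, or $\N^k \R \N^l \R Y$, and prepending $\N$ to $X$ sends each form to itself with $k$ replaced by $k+1$. Consequently, in every case the output formula acquires exactly one additional leading $\neg$, which is precisely $\neg I_{F_A}(H)$.

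For part (2), decompose $S \equiv H Y$ with $H \in \Pred_{F_A}$ (unique by the analogue of the earlier lemma for frames) and $Y \in \Sigma_{F_A}^\ast$. Since $F_A = F_{\N \R}$, the string $\N H$ is again of the form (word in $\{\N, \R\}^\ast$)$\,a_i$, so $\N H \in \Pred_{F_A}$ and $\N S \equiv (\N H) Y$ is an element of $\Stc_{F_A}$ whose predicate part begins with $\N$. I would then split according to where $Y$ lies. If $Y \in \Pred_{F_A}$, clause~1 of Definition~\ref{def:J} gives $J_{F_A}(\N S) \equiv I_{F_A}(\N H)(\gn{I_{F_A}(Y)(v)})$, which by part (1) collapses to $\neg J_{F_A}(S)$. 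If $Y \notin \Stc_{F_A}$ or $Y \in \Stc_{F_A}^+$, then the third bullet of clause~2 or clause~3 applies to $\N S$ (because the predicate $\N H$ starts with $\N$) and yields $J_{F_A}(\N S) \equiv \neg J_{F_A}(H Y) \equiv \neg J_{F_A}(S)$ immediately.

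No serious obstacle is anticipated; the argument is essentially bookkeeping. The only point worth flagging is that the third bullet in each of clauses~2 and~3 of Definition~\ref{def:J} is available precisely because $F_A = F_{\N \R}$, which is where the hypothesis of the proposition is consumed. Part (1) carries the real content in the $Y \in \Pred_{F_A}$ case; in the remaining two cases the recursion built into Definition~\ref{def:J} produces the outer $\neg$ by design.
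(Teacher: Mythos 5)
Your proposal is correct and follows essentially the same route as the paper: part (1) by observing that prepending $\N$ preserves which clause of Definition~\ref{def:I} applies and increments $k$ by one, and part (2) by splitting on whether the tail lies in $\Pred_{F_A}$ (where clause~1 of Definition~\ref{def:J} plus part (1) is used) or not (where the third bullet of clause~2 or~3 gives the claim immediately). No gaps.
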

\begin{proof}
1. Since $H$ and $\N H$ contain the same number of $\R$'s, we have that $I_{F_A}(\N H)$ is exactly $\neg I_{F_A}(H)$ by the definition of $I_{F_A}$. 

2. Let $S \equiv HX$ for $H \in \Pred_{F_A}$ and $X \in \Sigma_{F_A}^\ast$. 
If $X \notin \Pred_{F_A}$, then $X \notin \Stc_{F_A}$ or $X \in \Stc_{F_A}^+$, and hence this proposition is trivial by the definition of $J_{F_A}$. 
If $X \in \Pred_{F_A}$, then we have
\begin{align*}
    J_{F_A}(\N H X) & \overset{\textrm{Def.}~\ref{def:J}}{\equiv} I_{F_A}(\N H)(\gn{I_{F_A}(X)(v)}) \\
    & \overset{\textrm{Clause 1}}{\equiv} \neg I_{F_A}(H)(\gn{I_{F_A}(X)(v)}) \\
    & \overset{\textrm{Def.}~\ref{def:J}}{\equiv} \neg J_{F_A}(H X). \qedhere
\end{align*}
\end{proof}


Next, we investigate the effect of the symbol $\R$ in arithmetic through the mapping $J_{F_{\N \R}}$.
For this purpose, we prepare the following lemma. 

\begin{lem}\label{ab_lem}
Let $H \in \Pred_{F_A}$ and $X \in \Stc_{F_A}^+$. 
If $H$ contains no $\R$, then $J_{F_A}(H X) \equiv I_{F_A}(H)(\gn{J_{F_A}(X)})$. 
\end{lem}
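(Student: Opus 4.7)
The plan is to induct on the number $k$ of $\N$'s appearing in $H$. Since $H \in \Pred_{F_A}$ contains no $\R$, it must be of the form $H \equiv \N^k a_i$ for some $k \geq 0$ and some $i \in \Gamma$. Note that when $F_A = F_{\R}$ the only admissible value is $k = 0$, so in that case the lemma reduces entirely to the base case; the genuinely recursive content lives in the $F_{\N\R}$ setting.

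For the base case ($k = 0$), we have $H \equiv a_i$, and the required identity $J_{F_A}(a_i X) \equiv I_{F_A}(a_i)(\gn{J_{F_A}(X)})$ is precisely the first bullet of clause 3 of Definition \ref{def:J}, which is directly applicable because the hypothesis $X \in \Stc_{F_A}^+$ is exactly the side-condition of that clause. For the inductive step, I would write $H \equiv \N H'$ with $H' \equiv \N^{k-1} a_i$, so that $H' \in \Pred_{F_A}$ and $H'$ still contains no $\R$. The crucial preliminary observation is that $H'X \in \Stc_{F_A}^+$: indeed $H'X \in \Stc_{F_A}$ because $H'$ is a predicate, and $H'X \notin \Pred_{F_A}$ by the requirement $(\dagger)$ applied to the non-empty suffix $X$. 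This legitimizes unwinding $J_{F_A}(HX) = J_{F_A}(\N H' X)$ via the third bullet of clause 3 to obtain $\neg J_{F_A}(H'X)$. Applying the induction hypothesis to $H'$ yields $\neg I_{F_A}(H')(\gn{J_{F_A}(X)})$, and then Proposition \ref{ab_N}(1) converts this into $I_{F_A}(\N H')(\gn{J_{F_A}(X)}) \equiv I_{F_A}(H)(\gn{J_{F_A}(X)})$, as required.

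The main obstacle, such as it is, is the case-bookkeeping in the inductive step: one must verify that $H'X$ falls under the $\Stc_{F_A}^+$ clause of Definition \ref{def:J} rather than under $X \in \Pred_{F_A}$ or $X \notin \Stc_{F_A}$, since those alternative clauses would yield either a wrong substitution pattern or a spurious $\bot$. Once this single checkpoint is cleared by invoking $(\dagger)$, the remainder is a routine composition of Definition \ref{def:J} with Proposition \ref{ab_N}(1).
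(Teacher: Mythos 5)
Your proof is correct and follows essentially the same route as the paper: induction on the number $k$ of $\N$'s in $H \equiv \N^k a_i$, with the base case read off from clause 3 of Definition \ref{def:J} and the inductive step peeling off one $\N$ and applying Proposition \ref{ab_N}(1); the paper merely cites Proposition \ref{ab_N}(2) for the step $J_{F_A}(\N H' X) \equiv \neg J_{F_A}(H' X)$, whereas you unwind Definition \ref{def:J} directly, which amounts to the same thing. One small remark: the side condition selecting clause 3 of Definition \ref{def:J} is $X \in \Stc_{F_A}^+$ (a standing hypothesis of the lemma), not $H'X \in \Stc_{F_A}^+$, although your observation that $H'X \in \Stc_{F_A}^+$ is true and does no harm.
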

\begin{proof}
    It suffices to show that for every $k \geq 0$ and $i \in \Gamma$, we have $J_{F_A}(\N^k a_i X) \equiv I_{F_A}(\N^k a_i)(\gn{J_{F_A}(X)})$. 
    We prove this by induction on $k$. 

For $k = 0$, $J_{F_A}(a_i X) \equiv I_{F_A}(a_i) (\gn{J_{F_A}(X)})$ directly follows from Definition \ref{def:J}. 

Suppose that the lemma holds for $k$. 
We prove the lemma holds for $k+1$. 
In this case, we have $F_A = F_{\N \R}$. 
    \begin{align*}
J_{F_A}(\N^{k+1} a_i X) & \overset{\textrm{Prop.}~\ref{ab_N}.(2)}{\equiv} \neg J_{F_A}(\N^k a_i X) \\
    & \overset{\textrm{I.H.}}{\equiv} \neg I_{F_A}(\N^k a_i)(\gn{J_{F_A}(X)}) \\
    & \overset{\textrm{Prop.}~\ref{ab_N}.(1)}{\equiv} I_{F_A}(\N^{k+1} a_i)(\gn{J_{F_A}(X)}). \qedhere
    \end{align*}
\end{proof}

\begin{prop}\label{ab_R}
Let $H \in \Pred_{F_A}$ and $K \in \Sigma_{F_A}^\ast$. 
\begin{enumerate}
    \item If $K \notin \Pred_{F_A}$, then $J_{F_A}(\R H K) \equiv \bot$. 
    
    \item If $K \in \Pred_{F_A}$, then $\PA \vdash J_{F_A}(\R H K) \leftrightarrow J_{F_A}(H K K)$. 
\end{enumerate}
\end{prop}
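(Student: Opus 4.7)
The plan is to dispatch part 1 directly from the definition and to handle part 2 by splitting on whether $H$ contains the symbol $\R$. For part 1, when $K \notin \Pred_{F_A}$, either $K \notin \Stc_{F_A}$ or $K \in \Stc_{F_A}^+$; in both cases the predicate part of $\R H K$ is $\R H$, and the second bullet of clause 2 (respectively clause 3) of Definition \ref{def:J} immediately gives $J_{F_A}(\R H K) \equiv \bot$.

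For part 2, first observe that since $K$ is a nonempty predicate, requirement ($\dagger$) forces $KK \notin \Pred_{F_A}$, so $KK \in \Stc_{F_A}^+$; and by clause 1 of Definition \ref{def:J}, $J_{F_A}(\R H K) \equiv I_{F_A}(\R H)(\gn{I_{F_A}(K)(v)})$. I would then split on the structure of $H$. Case A: $H$ contains no $\R$, so $H \equiv \N^k a_i$ for some $k \geq 0$ and $i \in \Gamma$. Definition \ref{def:I} then gives $I_{F_A}(\R H) \equiv \neg^k \varphi_i(d(v))$, so the left-hand side is $\neg^k \varphi_i(d(\gn{I_{F_A}(K)(v)}))$. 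Applying Lemma \ref{ab_lem} together with clause 1 of Definition \ref{def:J} to the right-hand side gives $\neg^k \varphi_i(\gn{I_{F_A}(K)(\gn{I_{F_A}(K)(v)})})$. Setting $m := \gn{I_{F_A}(K)(v)}$ so that $I_{F_A}(K)(v) \equiv \varphi_m(v)$, the defining property of $d$ yields $d(m) = \gn{\varphi_m(\num{m})} = \gn{I_{F_A}(K)(\num{m})}$, and the representability of $d$ in $\PA$ (together with the identification of $\num{n}$ with $n$ for Gödel numbers) delivers the desired $\PA$-provable equivalence.

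Case B: $H$ contains at least one $\R$, so $H \equiv \N^k \R H'$ for some $k \geq 0$ and some $H' \in \Pred_{F_A}$. Then $\R H$ falls under the third case of Definition \ref{def:I}, yielding $I_{F_A}(\R H) \equiv \neg^k \bot$, so the left-hand side is syntactically $\neg^k \bot$. For the right-hand side, iterating Proposition \ref{ab_N}(2) (invoked only when $k \geq 1$, which forces $F_A = F_{\N \R}$) gives $J_{F_A}(H K K) \equiv \neg^k J_{F_A}(\R H' K K)$, and the second bullet of clause 3 of Definition \ref{def:J} yields $J_{F_A}(\R H' K K) \equiv \bot$; hence both sides are syntactically $\neg^k \bot$. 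The only substantive step is the invocation of the representability of $d$ in Case A, which carries all the arithmetic content of the proposition; everything else is routine pattern-matching against Definitions \ref{def:I} and \ref{def:J}, with care taken to enumerate the sub-cases of Definition \ref{def:I} correctly when computing $I_{F_A}(\R H)$.
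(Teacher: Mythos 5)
Your proof is correct and takes essentially the same route as the paper's: part 1 is read off directly from Definition \ref{def:J}, and part 2 is handled by the same case split on whether $H$ contains $\R$, reducing both sides to $\neg^{k}\bot$ in the one case and combining Lemma \ref{ab_lem} with the $\PA$-provable computation of $d$ on the numeral $\gn{I_{F_A}(K)(v)}$ in the other.
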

\begin{proof}
1. Let $K \notin \Pred_{F_A}$. 
By Definition \ref{def:J}, we have $J_{F_A}(\R H K) \equiv \bot$.

2. Let $K \in \Pred_{F_A}$. 
We distinguish the following two cases: 

\medskip

Case 1: $H$ contains at least one $\R$. \\
In this case, we find some $l \geq 0$ and $H' \in \Pred_{F_A}$ such that $H \equiv \N^l \R H'$. 
\begin{align*}
J_{F_A}(\R \N^l \R H' K) & \overset{\textrm{Def.}~\ref{def:J}}{\equiv} I_{F_A}(\R \N^l \R H')(\gn{J_{F_A}(K)(v)}) \\
 & \overset{\textrm{Def.}~\ref{def:I}}{\equiv} \neg^l\, \bot \\
 & \overset{\textrm{Def.}~\ref{def:J}}{\equiv} \neg^l\, J_{F_A}(\R H' K K) \tag{$KK \in \Stc_{F_A}^+$} \\
 & \overset{\textrm{Prop.}~\ref{ab_N}.(2)}{\equiv} J_{F_A}(\N^l \R H' KK). 
\end{align*}

\medskip

Case 2: $H$ contains no $\R$. 
\begin{align*}
J_{F_A}(\R H K) & \overset{\textrm{Def.}~\ref{def:J}}{\equiv} I_{F_A}(\R H)(\gn{I_{F_A}(K)(v)}) \\
& \overset{\textrm{Def.}~\ref{def:I}}{\equiv} I_{F_A}(H)(d(\gn{I_{F_A}(K)(v)})) \\
& \overset{\textrm{Def.~of}\ d}{\leftrightarrow_{\PA}} I_{F_A}(H)(\gn{I_{F_A}(K)(\gn{I_{F_A}(K)(v)})}) \\
& \overset{\textrm{Def.}~\ref{def:J}}{\equiv}  I_{F_A}(H)(\gn{J_{F_A}(K K)}) \\
& \overset{\textrm{Lem.}~\ref{ab_lem}}{\equiv}  J_{F_A}(H K K). \tag{$KK \in \Stc_{F_A}^+$} 
\end{align*}
\end{proof}



\subsection{The $\mathbb{N}$-based Smullyan model $M_{\mathbb{N}}$}

In this subsection, we introduce the Smullyan model $M_{\mathbb{N}}$ which is defined by referring to the standard model $\mathbb{N}$ of arithmetic. 
We prove that $\mathbb{N}$ is actually an $\N \R$-Smullyan model. 
Then, we prove that $\FPT$, $\STTarski$, and $\SNGodel$ for $M_{\mathbb{N}}$ correspond to the Fixed Point Theorem over $\mathbb{N}$, the original Tarski's Undefinability Theorem, and a version of G\"odel's First Incompleteness Theorem, respectively. 

\begin{dfn}[The Smullyan model $M_{\mathbb{N}}$]\leavevmode
\begin{enumerate}
    \item We define the function $\Phi_{\mathbb{N}}: \Pred_{F_{\N \R}} \rightarrow \mathcal{P}(\Sigma_{F_{\N \R}}^\ast)$ as follows: 
    \[
        \Phi_{\mathbb{N}}(H) : = \{X \in \Sigma_{F_{\N \R}}^\ast \mid \mathbb{N} \models J_{F_{\N \R}}(HX)\}.
    \]

    \item The triple $M_{\mathbb{N}} : = (F_{\N \R}, \Phi_{\mathbb{N}}) = (\Sigma_{\N \R}, \Pred_{\N \R}, \Phi_{\mathbb{N}})$ is called the \textit{$\mathbb{N}$-based Smullyan model}. 
\end{enumerate}
\end{dfn}

From the definition of $\Phi_{\mathbb{N}}$, we obtain
\begin{equation}\label{True_N}
\True_{M_{\mathbb{N}}} = \{S \in \Stc_{F_{\N \R}} \mid \mathbb{N} \models J_{F_{\N \R}}(S)\}.
\end{equation}

\begin{thm}
The $\mathbb{N}$-based Smullyan model $M_{\mathbb{N}}$ is an $\N\R$-Smullyan model.
\label{THM_Nbased}
\end{thm}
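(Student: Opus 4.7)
The plan is to verify the two axioms defining $\N$-Smullyan and $\R$-Smullyan models for $M_{\mathbb{N}}$. The closure conditions on $\Pred_{\N\R}$ are immediate from its definition: since $\Pred_{\N\R} = \{Xa_i \mid X \in \{\N,\R\}^\ast,\ i \in \Gamma\}$, prefixing any $H \in \Pred_{\N\R}$ by $\N$ or $\R$ yields another element of $\Pred_{\N\R}$. So the real content is to check that $\Phi_{\mathbb{N}}$ behaves correctly under these prefixes, and this will be an essentially mechanical translation of Propositions \ref{ab_N} and \ref{ab_R} through the definition of $\Phi_{\mathbb{N}}$ via $\True_{M_{\mathbb{N}}}$ (equation \eqref{True_N}).

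For the $\N$-condition, I would fix $H \in \Pred_{\N\R}$ and an arbitrary $X \in \Sigma^\ast_{\N\R}$. Since $H \in \Pred_{\N\R}$, the string $HX$ is automatically an $F_{\N\R}$-sentence, so Proposition \ref{ab_N}.(2) applies and gives $J_{F_{\N\R}}(\N HX) \equiv \neg J_{F_{\N\R}}(HX)$. Then the chain
\[
X \in \Phi_{\mathbb{N}}(\N H) \iff \mathbb{N} \models J_{F_{\N\R}}(\N HX) \iff \mathbb{N} \not\models J_{F_{\N\R}}(HX) \iff X \notin \Phi_{\mathbb{N}}(H)
\]
yields $\Phi_{\mathbb{N}}(\N H) = \Sigma^\ast_{\N\R} \setminus \Phi_{\mathbb{N}}(H)$, as required.

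For the $\R$-condition, I would split on whether a given $K \in \Sigma^\ast_{\N\R}$ belongs to $\Pred_{\N\R}$. If $K \notin \Pred_{\N\R}$, then Proposition \ref{ab_R}.(1) yields $J_{F_{\N\R}}(\R HK) \equiv \bot$, so $K \notin \Phi_{\mathbb{N}}(\R H)$. If $K \in \Pred_{\N\R}$, then Proposition \ref{ab_R}.(2) gives $\PA \vdash J_{F_{\N\R}}(\R HK) \leftrightarrow J_{F_{\N\R}}(HKK)$, and since $\mathbb{N} \models \PA$ we conclude $\mathbb{N} \models J_{F_{\N\R}}(\R HK) \iff \mathbb{N} \models J_{F_{\N\R}}(HKK)$, i.e., $K \in \Phi_{\mathbb{N}}(\R H) \iff KK \in \Phi_{\mathbb{N}}(H)$. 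Combining these two cases yields precisely $\Phi_{\mathbb{N}}(\R H) = \{K \in \Pred_{\N\R} \mid KK \in \Phi_{\mathbb{N}}(H)\}$.

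There is no real obstacle: the two propositions encoding how $\N$ and $\R$ act under $J_{F_{\N\R}}$ do all the work, and soundness of $\PA$ in $\mathbb{N}$ is the only external ingredient. The only minor point to be careful about is that Proposition \ref{ab_N}.(2) requires its argument to be an $F_{\N\R}$-sentence, which is automatic here because the outermost symbol of $HX$ is the predicate $H$.
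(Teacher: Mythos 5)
Your proposal is correct and follows essentially the same route as the paper's proof: verify the predicate closure from the definition of $\Pred_{\N\R}$, derive the $\N$-condition from Proposition \ref{ab_N}.(2), and derive the $\R$-condition from Proposition \ref{ab_R} together with the soundness of $\PA$ in $\mathbb{N}$. Your version merely makes explicit the case split on $K \in \Pred_{\N\R}$ and the use of $\mathbb{N} \models \PA$, which the paper leaves implicit in its displayed equalities.
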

\begin{proof}
We mentioned in Proposition \ref{AB-models} that $M_{\mathbb{N}}$ is a Smullyan model. 
So, it suffices to show that $\Phi_{\mathbb{N}}$ satisfies the requirements concerning $\N$ and $\R$.
Let $H \in F_{\N \R}$.  
    \begin{align*}
        \Phi_{\mathbb{N}}(\N H)  & = \{X \in \Sigma_{F_{\N \R}}^\ast  \mid \mathbb{N} \models J_{F_{\N \R}}(\N H X)  \} \\
         & = \{X \in \Sigma_{F_{\N \R}}^\ast \mid \mathbb{N} \models \lnot J_{F_{\N \R}}(H X)  \} \tag{by Proposition \ref{ab_N}.(2)}\\
         & = \Sigma^\ast_{F_{\N \R}} \setminus \{X \in \Sigma^\ast_{F_{\N \R}} \mid \mathbb{N} \models  J_{F_{\N \R}}(HX)  \}  \\
         & = \Sigma^\ast_{F_{\N \R}} \setminus \Phi_{\mathbb{N}}(H). 
    \end{align*}

    \begin{align*}
    \Phi_{\mathbb{N}}(\R H) & = \{X \in \Sigma^\ast_{F_{\N \R}} \mid \mathbb{N} \models J_{F_{\N \R}}(\R HX)\}\\
    & =  \{K \in \Pred_{F_{\N \R}} \mid \mathbb{N} \models J_{F_{\N \R}}(H K K) \} \tag{by Proposition \ref{ab_R}}\\
    & = \{K \in \Pred_{F_{\N \R}} \mid KK \in \Phi_{\mathbb{N}}(H)\}. \qedhere 
    \end{align*}
\end{proof}

Therefore, by the results we have obtained so far, $M_{\mathbb{N}}$ satisfies $\FPT$, $\STTarski$, and $\SNGodel$. 
We show that each of these facts yields a meaningful property in arithmetic. 
First, we show that the Fixed Point Theorem over $\mathbb{N}$ follows from $\FPT$ for $M_{\mathbb{N}}$. 

\begin{thm}[Fixed Point Theorem over $\mathbb{N}$]\label{M_N:FPT}
The following statement follows from $\FPT$ for $M_{\mathbb{N}}$: ``For any $v$-formula $\varphi_i(v)$, there exists an $\mathcal{L}_A$-sentence $\theta$ such that $\mathbb{N} \models \theta \leftrightarrow \varphi_i(\gn{\theta})$''. 
\end{thm}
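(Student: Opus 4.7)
The plan is to apply $\FPT$ for $M_{\mathbb{N}}$ to the one-letter predicate $a_i$ and then translate the resulting $M_{\mathbb{N}}$-fixed point into arithmetic via $J_{F_{\N \R}}$. First, since $i \in \Gamma$, taking the prefix $X \equiv \epsilon \in \{\N,\R\}^\ast$ shows that $a_i \in \Pred_{\N \R}$, so $a_i$ is an $M_{\mathbb{N}}$-predicate. By $\FPT$ applied to $a_i$, there exists $S \in \Stc_{F_{\N \R}}^+$ such that $M_{\mathbb{N}} \models S$ if and only if $M_{\mathbb{N}} \models a_i S$.

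Next, I would unfold this biconditional through the characterization (\ref{True_N}) of $\True_{M_{\mathbb{N}}}$ to obtain $\mathbb{N} \models J_{F_{\N \R}}(S)$ if and only if $\mathbb{N} \models J_{F_{\N \R}}(a_i S)$. Because $S \in \Stc_{F_{\N \R}}^+$, clause 3 of Definition \ref{def:J} yields $J_{F_{\N \R}}(a_i S) \equiv I_{F_{\N \R}}(a_i)(\gn{J_{F_{\N \R}}(S)})$, and Definition \ref{def:I} (with $k = 0$) gives $I_{F_{\N \R}}(a_i) \equiv \varphi_i(v)$. Setting $\theta := J_{F_{\N \R}}(S)$, which is an $\mathcal{L}_A$-sentence since $S \in \Stc_{F_{\N \R}}$, the equivalence rewrites as $\mathbb{N} \models \theta$ if and only if $\mathbb{N} \models \varphi_i(\gn{\theta})$, which is exactly $\mathbb{N} \models \theta \leftrightarrow \varphi_i(\gn{\theta})$.

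There is no genuinely hard step here: the mappings $I_{F_{\N \R}}$ and $J_{F_{\N \R}}$ were tailored so that the single letter $a_i$ on the $M_{\mathbb{N}}$ side corresponds to the $v$-formula $\varphi_i(v)$ on the arithmetical side, and the diagonal construction that usually makes fixed-point arguments delicate is hidden inside $\FPT$; indeed, the concrete witness $\R a_i \R a_i$ supplied by the proof of Theorem \ref{THM_FPT} translates, via Proposition \ref{ab_R}, into the customary arithmetical diagonal sentence. The only point requiring minor care is invoking clause 3 rather than clause 2 of Definition \ref{def:J}, which is legitimate because $M$-fixed points are required by definition to lie in $\Stc_M^+$.
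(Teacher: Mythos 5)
Your proposal is correct and follows essentially the same route as the paper's proof: apply $\FPT$ to the predicate $a_i$, unwind the fixed-point equivalence through the characterization of $\True_{M_{\mathbb{N}}}$, and identify $J_{F_{\N \R}}(a_i S)$ with $\varphi_i(\gn{J_{F_{\N \R}}(S)})$, setting $\theta := J_{F_{\N \R}}(S)$. The only cosmetic difference is that the paper cites Lemma \ref{ab_lem} where you invoke clause 3 of Definition \ref{def:J} directly, but for the single-letter predicate $a_i$ that clause is exactly the base case of that lemma, so the arguments coincide.
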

\begin{proof}
Let $\varphi_i(v)$ be any $v$-formula. 
By $\FPT$ for $M_{\mathbb{N}}$, we find an $M_{\mathbb{N}}$-fixed point $S \in \Stc_{F_{\N \R}}^+$ of $a_i \in \Pred_{F_{\N \R}}$.
The following equivalences show that the $\mathcal{L}_A$-sentence $J_{F_{\N \R}}(S)$ is a fixed point of $\varphi_i(v)$ over $\mathbb{N}$. 
\begin{align*}
    \mathbb{N} \models J_{F_{\N \R}}(S) & \overset{(\ref{True_N})}{\iff} S \in \True_{M_{\mathbb{N}}} \iff M_{\mathbb{N}} \models S \\
    & \overset{\FPT}{\iff} M_{\mathbb{N}} \models a_i S \iff a_i S \in \True_{M_{\mathbb{N}}} \\
    & \overset{(\ref{True_N})}{\iff} \mathbb{N} \models J_{F_{\N \R}}(a_i S) \\
    & \overset{\textrm{Lem.}~\ref{ab_lem}}{\iff} \mathbb{N} \models I_{F_{\N \R}}(a_i)(\gn{J_{F_{\N \R}}(S)}) \tag{$S \in \Stc_{F_{\N \R}}^+$}\\
    & \overset{\textrm{Def.}~\ref{def:I}}{\iff} \mathbb{N} \models \varphi_i(\gn{J_{F_{\N \R}}(S)}).
 \qedhere\end{align*}
\end{proof}

Second, we show that $\STTarski$ for $M_{\mathbb{N}}$ corresponds to the original version of Tarski's Undefinability Theorem. 

\begin{thm}[Tarski's Undefinability Theorem]\label{M_N:Tarski}
The following statement follows from $\STTarski$ for $M_{\mathbb{N}}$: ``The set $\mathsf{TA}$ of all $\mathcal{L}_A$-sentences true in $\mathbb{N}$ is not definable in $\mathbb{N}$''. 
\end{thm}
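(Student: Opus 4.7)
The plan is to argue by contraposition: assume $\mathsf{TA}$ is definable in $\mathbb{N}$ and derive a failure of $\STTarski$ for $M_{\mathbb{N}}$. Concretely, suppose that there is a $v$-formula defining $\mathsf{TA}$; since every $v$-formula is $\varphi_i(v)$ for some $i \in \Gamma$, we may fix such an $i$ so that for every $\mathcal{L}_A$-sentence $\psi$,
\[
    \mathbb{N} \models \varphi_i(\gn{\psi}) \iff \psi \in \mathsf{TA} \iff \mathbb{N} \models \psi.
\]
The candidate $M_{\mathbb{N}}$-predicate that will witness failure of $\STTarski$ is $a_i \in \Pred_{F_{\N \R}}$.

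The next step is to verify that $\Phi_{\mathbb{N}}(a_i) \cap \Stc_{F_{\N \R}}^+ = \True_{M_{\mathbb{N}}}^+$. For any $S \in \Stc_{F_{\N \R}}^+$, the definition of $\Phi_{\mathbb{N}}$ gives $S \in \Phi_{\mathbb{N}}(a_i)$ iff $\mathbb{N} \models J_{F_{\N \R}}(a_i S)$. Because $a_i$ contains no $\R$, Lemma \ref{ab_lem} rewrites this as $\mathbb{N} \models I_{F_{\N \R}}(a_i)(\gn{J_{F_{\N \R}}(S)})$, which by Definition \ref{def:I} is exactly $\mathbb{N} \models \varphi_i(\gn{J_{F_{\N \R}}(S)})$. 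Applying the assumed definability of $\mathsf{TA}$ to the $\mathcal{L}_A$-sentence $J_{F_{\N \R}}(S)$, this is equivalent to $\mathbb{N} \models J_{F_{\N \R}}(S)$, which by (\ref{True_N}) is $S \in \True_{M_{\mathbb{N}}}$; and since $S \in \Stc_{F_{\N \R}}^+$, this is $S \in \True_{M_{\mathbb{N}}}^+$.

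Concatenating these equivalences shows that $a_i$ satisfies $\True_{M_{\mathbb{N}}}^+ = \Phi_{\mathbb{N}}(a_i) \cap \Stc_{F_{\N \R}}^+$, directly contradicting $\STTarski$ for $M_{\mathbb{N}}$. I do not expect a real obstacle here: the only place where care is needed is the invocation of Lemma \ref{ab_lem}, which requires both that $a_i$ contain no $\R$ (trivially true, as $a_i$ is a single symbol distinct from $\R$) and that $S \in \Stc_{F_{\N \R}}^+$ (which is precisely the restriction that makes $\STTarski$, rather than $\TTarski$, the right principle to invoke). This is also the conceptual reason why the theorem is stated using $\STTarski$: the stronger version is what removes the trivial mismatch coming from non-sentences in $\Sigma_{F_{\N \R}}^\ast$ and lets the argument connect cleanly with the arithmetical definability assumption.
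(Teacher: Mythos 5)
Your proof is correct and is essentially the paper's argument run in contrapositive form: the paper fixes an arbitrary $v$-formula $\varphi_i$, uses $\STTarski$ to obtain a witness $S \in \Stc_{F_{\N \R}}^+$ with $S \in \True_{M_{\mathbb{N}}} \iff S \notin \Phi_{\mathbb{N}}(a_i)$, and then runs exactly the same chain of equivalences (via (\ref{True_N}), Lemma \ref{ab_lem}, and Definition \ref{def:I}) to conclude that $J_{F_{\N \R}}(S)$ witnesses that $\varphi_i$ does not define $\mathsf{TA}$. The substance --- the predicate $a_i$, the translation $J_{F_{\N \R}}$, and the two lemma invocations --- is identical, so this is the same proof up to direction of argument.
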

\begin{proof}
Let $\varphi_i(v)$ be any $v$-formula. 
By $\STTarski$ for $M_{\mathbb{N}}$, we have that $\True_{M_{\mathbb{N}}}^+ \neq \Phi_{\mathbb{N}}(a_i) \cap \Stc_{F_{\N \R}}^+$.  
That is, there exists $S \in \Stc_{F_{\N \R}}^+$ such that $S \in \True_{M_{\mathbb{N}}} \iff S \notin \Phi_{\mathbb{N}}(a_i)$. 
Then, 
\begin{align*}
    \mathbb{N} \models J_{F_{\N \R}}(S) & \overset{(\ref{True_N})}{\iff} S \in \True_{M_{\mathbb{N}}} \iff S \notin \Phi_{\mathbb{N}}(a_i) \iff a_i S \notin \True_{M_{\mathbb{N}}} \\
    & \overset{(\ref{True_N})}{\iff} \mathbb{N} \not \models J_{F_{\N \R}}(a_i S) \overset{\textrm{Lem.}~\ref{ab_lem}}{\iff} \mathbb{N} \not \models I_{F_{\N \R}}(a_i)(\gn{J_{F_{\N \R}}(S)}) \tag{$S \in \Stc_{F_{\N \R}}^+$}\\
    & \overset{\textrm{Def.}~\ref{def:I}}{\iff} \mathbb{N} \not \models \varphi_i(\gn{J_{F_{\N \R}}(S)}).
\end{align*}
These equivalences show that $\varphi_i(v)$ does not define $\mathsf{TA}$. 
\end{proof}


Finally, we show that $\SNGodel$ for $M_{\mathbb{N}}$ corresponds to a version of G\"odel's First Incompleteness Theorem with respect to arithmetically definable sound theories (cf.~Kikuchi and Kurahashi \cite{KK17} and Salehi and Seraji \cite{SS17}). 

\begin{thm}[A version of G\"odel's First Incompleteness Theorem]\label{M_N:Godel}
The following statement follows from $\SNGodel$ for $M_{\mathbb{N}}$: ``Every arithmetically definable sound $\mathcal{L}_A$-theory is incomplete''. 
\end{thm}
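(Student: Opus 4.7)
The plan is to adapt the proof of Theorem \ref{M_N:Tarski}, using $\SNGodel$ for $M_{\mathbb{N}}$ in place of $\STTarski$. Fix a sound, arithmetically definable $\mathcal{L}_A$-theory $T$, and pick a $v$-formula $\varphi_i(v)$ (so $i \in \Gamma$) that defines $T$ in the sense that $\psi \in T \iff \mathbb{N} \models \varphi_i(\gn{\psi})$ for every $\mathcal{L}_A$-sentence $\psi$; soundness amounts to $T \subseteq \mathsf{TA}$. I would apply $\SNGodel$ to the $M_{\mathbb{N}}$-predicate $a_i \in \Pred_{F_{\N \R}}$.

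First, I would verify the hypothesis $\Phi_{\mathbb{N}}(a_i) \cap \Stc_{F_{\N \R}}^+ \subseteq \True_{M_{\mathbb{N}}}^+$. For $S \in \Stc_{F_{\N \R}}^+$, Lemma \ref{ab_lem} combined with Definition \ref{def:I} gives $J_{F_{\N \R}}(a_i S) \equiv \varphi_i(\gn{J_{F_{\N \R}}(S)})$; hence $S \in \Phi_{\mathbb{N}}(a_i)$ is equivalent to $\mathbb{N} \models \varphi_i(\gn{J_{F_{\N \R}}(S)})$, equivalently to $J_{F_{\N \R}}(S) \in T$, and soundness then forces $\mathbb{N} \models J_{F_{\N \R}}(S)$, that is, $S \in \True_{M_{\mathbb{N}}}^+$ by (\ref{True_N}).

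Since $M_{\mathbb{N}}$ is an $\N \R$-Smullyan model (Theorem \ref{THM_Nbased}), $\SNGodel$ holds for it via the equivalences summarized in Figure \ref{Fig1}, and it supplies an $S \in \Stc_{F_{\N \R}}^+$ with $S \notin \Phi_{\mathbb{N}}(a_i)$ and $\N S \notin \Phi_{\mathbb{N}}(a_i)$. Set $\theta := J_{F_{\N \R}}(S)$. The first condition unfolds via the computation above to $\mathbb{N} \not\models \varphi_i(\gn{\theta})$, whence $\theta \notin T$. For the second, write $S \equiv H X$ with $H \in \Pred_{F_{\N \R}}$ and $X \neq \epsilon$; then $\N S \equiv (\N H) X \in \Stc_{F_{\N \R}}^+$, so Lemma \ref{ab_lem} applies to $a_i \N S$ as well, and Proposition \ref{ab_N}.(2) gives $J_{F_{\N \R}}(\N S) \equiv \neg \theta$, so the same translation yields $\neg \theta \notin T$. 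Hence $\theta$ is undecidable in $T$, and $T$ is incomplete.

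The only delicate step is verifying $\N S \in \Stc_{F_{\N \R}}^+$, which is needed in order to apply Lemma \ref{ab_lem} to $a_i \N S$; this follows directly from requirement ($\dagger$) together with the definition of $\Stc_M^+$. Beyond this, the proof is a routine translation along the mappings $I_{F_{\N \R}}$ and $J_{F_{\N \R}}$ already established in Section \ref{arithmetic}, and I do not anticipate any real obstacle.
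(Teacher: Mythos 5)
Your proof is correct and follows essentially the same route as the paper's: verify the hypothesis of $\SNGodel$ for the predicate $a_i$ using soundness of $T$, then unfold the conclusion via Lemma \ref{ab_lem} and Proposition \ref{ab_N} (your explicit check that $\N S \in \Stc_{F_{\N \R}}^+$ via ($\dagger$) is a detail the paper glosses over). The one difference is that the paper starts from a formula $\tau$ defining the \emph{axiom set} and constructs a provability predicate $\Pr_T(v)$ with $T \vdash \psi \iff \mathbb{N} \models \Pr_T(\gn{\psi})$, so its conclusion is literally $T \nvdash \theta$ and $T \nvdash \neg\theta$, whereas you take $\varphi_i$ to define membership in $T$ and conclude $\theta \notin T$, $\neg\theta \notin T$ --- which yields incompleteness only after identifying $T$ with its deductive closure (still sound and arithmetically definable), a harmless but worth-stating adjustment.
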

\begin{proof}
Let $T$ be any arithmetically definable sound $\mathcal{L}_A$-theory. 
We find an $\mathcal{L}_A$-formula $\tau(x)$ defining (the set of all G\"odel numbers of) $T$ in $\mathbb{N}$. 
As in the usual proof of G\"odel's incompleteness theorems, we can construct a provability predicate $\Pr_T(v)$ of $T$ by using $\tau(v)$, that is, for any $\mathcal{L}_A$-formula $\psi$, we have $T \vdash \psi \iff \mathbb{N} \models \Pr_T(\gn{\psi})$. 
Let $i \in \Gamma$ be such that $\varphi_i(v) \equiv \Pr_T(v)$. 

We shall show $\Phi_{\mathbb{N}}(a_i) \cap \Stc_{F_{\N \R}}^+ \subseteq \True_{M_{\mathbb{N}}}^+$. 
Let $S_0 \in \Phi_{\mathbb{N}}(a_i) \cap \Stc_{F_{\N \R}}^+$. 
Then, $a_i S_0 \in \True_{M_{\mathbb{N}}}$. 
As in the proof of Theorem \ref{M_N:FPT}, we have $\mathbb{N} \models \varphi_i(\gn{J_{F_{\N \R}}(S_0)})$. 
Hence, $T \vdash J_{F_{\N \R}}(S_0)$ because $\varphi_i(v)$ is identical to $\Pr_T(v)$. 
Since $T$ is sound, we have $\mathbb{N} \models J_{F_{\N \R}}(S_0)$. 
Thus, by (\ref{True_N}), we conclude $S_0 \in \True_{M_{\mathbb{N}}}^+$. 

By $\SNGodel$ for $M_{\mathbb{N}}$, we get $S \in \Stc_{F_{\N \R}}^+$ such that $S \notin \Phi_{\mathbb{N}}(a_i)$ and $\N S \notin \Phi_{\mathbb{N}}(a_i)$. 
Since $a_i S$ and $a_i \N S$ are not in $\True_{M_{\mathbb{N}}}$, we have $\mathbb{N} \not \models \varphi_i(\gn{J_{F_{\N \R}}(S_0)})$ and $\mathbb{N} \not \models \varphi_i(\gn{J_{F_{\N \R}}(\N S_0)})$. 
By Proposition \ref{ab_N}, we also have $\mathbb{N} \not \models \varphi_i(\gn{\neg J_{F_{\N \R}}(S_0)})$. 
We conclude that $T \nvdash J_{F_{\N \R}}(S_0)$ and $T \nvdash \neg J_{F_{\N \R}}(S_0)$. 
\end{proof}

\subsection{$\PA$-based Smullyan model $M_{\PA}$}

In this subsection, we introduce the Smullyan model $M_{\PA}$ which is defined by referring to Peano arithmetic $\PA$. 
We show that $M_{\PA}$ is a witness of the non-implication $\SFTarski \ \&\ \TTarski \overset{\R}{\not \Rightarrow} \STTarski$ (cf.~Proposition \ref{SFTTT-non-STT}). 

\begin{dfn}[The Smullyan model $M_{\PA}$]\leavevmode
    \begin{enumerate}
        \item We define the function $\Phi_{\PA}: \Pred_{F_{\R}} \rightarrow \mathcal{P}(\Sigma_{F_{\R}}^\ast)$ as follows: 
        \[
            \Phi_{\PA}(H) : = \{X \in\ \Sigma_{F_{\R}}^\ast \mid \PA \vdash J_{F_{\R}}(H X) \}
        \]

        \item The triple $M_{\PA} := (F_{\R} , \Phi_{\PA}) = (\Sigma_{F_\R}, \Pred_{F_\R}, \Phi_{\PA})$ is called the \textit{$\PA$-based Smullyan model}. 
    \end{enumerate}
\end{dfn}

From the definition of $\Phi_\PA$, we obtain
\begin{equation}\label{True_PA}
\True_{M_\PA} = \{S \in \Stc_{F_{\R}} \mid \PA \vdash J_{F_{\R}}(S)\}.
\end{equation}

The following theorem is proved as in the $\R$-part of the proof of Theorem \ref{THM_Nbased}. 

\begin{thm}\label{M_PA:RMODEL}
The $\PA$-based Smullyan model $M_{\PA}$ is an $\R$-Smullyan model.
\end{thm}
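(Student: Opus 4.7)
The plan is to verify directly the three conditions in the definition of an $\R$-Smullyan model for $M_\PA$. The first two are immediate from the definitions of $\Sigma_\R$ and $\Pred_\R$: clearly $\R \in \Sigma_\R$, and every $H \in \Pred_{F_\R}$ has the form $\R^j a_i$, so $\R H \equiv \R^{j+1} a_i$ again lies in $\Pred_{F_\R}$. The substantive task is thus to establish the equality
\[
\Phi_\PA(\R H) = \{K \in \Pred_{F_\R} \mid KK \in \Phi_\PA(H)\}
\]
for every $H \in \Pred_{F_\R}$.

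To prove this, my strategy is to parallel the $\R$-clause of the proof of Theorem \ref{THM_Nbased}, merely replacing ``$\mathbb{N}\models$'' by ``$\PA\vdash$''. Unfolding the definition of $\Phi_\PA$ and applying Proposition \ref{ab_R} one should obtain
\begin{align*}
\Phi_\PA(\R H)
 &= \{X \in \Sigma^\ast_{F_\R} \mid \PA \vdash J_{F_\R}(\R H X)\} \\
 &= \{K \in \Pred_{F_\R} \mid \PA \vdash J_{F_\R}(H K K)\} \\
 &= \{K \in \Pred_{F_\R} \mid KK \in \Phi_\PA(H)\}.
\end{align*}
The middle equality is the only step that uses content. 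For $X \notin \Pred_{F_\R}$, clause (1) of Proposition \ref{ab_R} gives $J_{F_\R}(\R H X) \equiv \bot$, which is unprovable in $\PA$ (by consistency of $\PA$), so such $X$ are not in $\Phi_\PA(\R H)$. For $X \equiv K \in \Pred_{F_\R}$, clause (2) yields $\PA \vdash J_{F_\R}(\R H K) \leftrightarrow J_{F_\R}(H K K)$, which immediately delivers the equivalence $\PA \vdash J_{F_\R}(\R H K) \iff \PA \vdash J_{F_\R}(H K K)$.

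There is essentially no obstacle: the argument is a transcription of the corresponding $\R$-step for $M_{\mathbb{N}}$, made possible precisely because Proposition \ref{ab_R} is formulated for an arbitrary $F_A \in \{F_\R, F_{\N\R}\}$ and, crucially, its clause (2) produces a $\PA$-provable biconditional rather than only a semantic equivalence over $\mathbb{N}$; this is what allows the step to go through at the syntactic level of $\PA \vdash$. The one conceptual remark I would record in the proof is the appeal to the consistency of $\PA$ to dispose of the case $X \notin \Pred_{F_\R}$, which is automatic since $\mathbb{N}$ is a model of $\PA$.
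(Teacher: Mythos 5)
Your proof is correct and matches the paper's approach exactly: the paper simply states that the theorem ``is proved as in the $\R$-part of the proof of Theorem \ref{THM_Nbased}'', which is precisely the transcription you carry out, with Proposition \ref{ab_R} supplying the $\PA$-provable biconditional needed at the syntactic level. Your added remark that the consistency of $\PA$ is what disposes of the case $X \notin \Pred_{F_\R}$ is the one detail the paper leaves implicit, and it is handled correctly.
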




    


So, by Theorem \ref{THM_FPT}, $M_{\PA}$ satisfies $\FPT$. 
Also by Proposition \ref{FPT-SFT}, $M_\PA$ satisfies $\SFTarski$. 
The following theorem stating that $\FPT$ for $M_\PA$ corresponds to a weak version of the Fixed Point Theorem over $\PA$ is also proved as in the proof of Theorem \ref{M_N:FPT}. 

\begin{thm}[A weak version of the Fixed Point Theorem over $\PA$]\label{M_PA:FPT}
The following statement follows from $\FPT$ for $M_\PA$: ``For every $v$-formula $\varphi_i(v)$, there exists an $\mathcal{L}_A$-sentence $\theta$ such that $\PA \vdash \theta \iff \PA \vdash \varphi_i(\gn{\theta})$''. 
\end{thm}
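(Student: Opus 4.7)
The plan is to mirror the proof of Theorem \ref{M_N:FPT} verbatim, swapping the semantic relation ``$\mathbb{N}\models$'' for the syntactic relation ``$\PA\vdash$''. The key observation that makes the swap work is that $\True_{M_\PA}$ is defined through provability in $\PA$ via equation (\ref{True_PA}), exactly analogously to how $\True_{M_\mathbb{N}}$ was defined via truth in $\mathbb{N}$ through (\ref{True_N}).

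Concretely, I would first fix an arbitrary $v$-formula $\varphi_i(v)$. Since $M_\PA$ is an $\R$-Smullyan model by Theorem \ref{M_PA:RMODEL}, Theorem \ref{THM_FPT} (the Fixed Point Theorem for $\R$-Smullyan models, i.e.\ $\FPT$) applies to the predicate $a_i\in\Pred_{F_\R}$, yielding an $M_\PA$-fixed point $S\in\Stc_{F_\R}^+$ of $a_i$. I would then set $\theta:\equiv J_{F_\R}(S)$ and argue
\begin{align*}
    \PA\vdash\theta &\overset{(\ref{True_PA})}{\iff} S\in\True_{M_\PA} \iff M_\PA\models S \\
    &\overset{\FPT}{\iff} M_\PA\models a_i S \iff a_i S\in\True_{M_\PA} \\
    &\overset{(\ref{True_PA})}{\iff} \PA\vdash J_{F_\R}(a_iS).
\end{align*}
Since $S\in\Stc_{F_\R}^+$, Lemma \ref{ab_lem} gives $J_{F_\R}(a_i S)\equiv I_{F_\R}(a_i)(\gn{J_{F_\R}(S)})$, and the simplified form of $I_{F_\R}$ in Definition \ref{def:I} collapses $I_{F_\R}(a_i)(v)$ to $\varphi_i(v)$. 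Hence $J_{F_\R}(a_iS)\equiv\varphi_i(\gn{\theta})$, which finishes the chain.

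There is no real obstacle here; the whole content of the theorem sits in the earlier machinery. The only point that deserves care is noticing that the statement claims only the \emph{metatheoretic} equivalence $\PA\vdash\theta \Leftrightarrow \PA\vdash\varphi_i(\gn\theta)$, not the stronger internal biconditional $\PA\vdash\theta\leftrightarrow\varphi_i(\gn\theta)$ of the usual fixed point theorem. This is precisely why $\FPT$ for $M_\PA$ suffices: an $M_\PA$-fixed point is, by definition of $\True_{M_\PA}$, exactly a sentence whose $\PA$-provability matches that of $\varphi_i$ applied to its own G\"odel number.
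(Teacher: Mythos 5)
Your proposal is correct and is exactly the argument the paper intends: the paper itself only remarks that Theorem \ref{M_PA:FPT} ``is also proved as in the proof of Theorem \ref{M_N:FPT}'', and your chain of equivalences is precisely that proof with $\mathbb{N}\models$ replaced by $\PA\vdash$ via (\ref{True_PA}), using Theorem \ref{M_PA:RMODEL}, Lemma \ref{ab_lem}, and Definition \ref{def:I} in the same way. Your closing remark correctly identifies why only the metatheoretic equivalence (and not the internal biconditional) is obtained.
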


The Fixed Point Theorem over $\PA$ used in the classic proof of the incompleteness theorems is of the form $\PA \vdash \theta \leftrightarrow \varphi_i(\gn{\theta})$, and the statement above is indeed weak. 
However, this weak version of the Fixed Point Theorem is sufficient for a proof of the incompleteness of $\PA$ as follows: 
Let $\varphi_i(v)$ be a $\Sigma_1$ provability predicate of $\PA$, that is, for any $\mathcal{L}_A$-formula $\psi$, we have $\PA \vdash \psi \iff \PA \vdash \varphi_i(\gn{\psi})$. 
By Theorem \ref{M_PA:FPT}, there exists an $\mathcal{L}_A$-sentence $\theta$ such that $\PA \vdash \theta \iff \PA \vdash \neg \varphi_i(\gn{\theta})$. 
If $\PA \vdash \theta$, then $\PA \vdash \neg \varphi_i(\gn{\theta})$ and $\PA \vdash \varphi_i(\gn{\theta})$. 
This contradicts the consistency of $\PA$. 
Hence, we have $\PA \nvdash \theta$. 
It follows that $\PA \nvdash \varphi_i(\gn{\theta})$ and $\PA \nvdash \neg \varphi_i(\gn{\theta})$.  
Therefore, $\PA$ is incomplete. 
This weak version of the Fixed Point Theorem was discussed in Moschovakis \cite[Theorem 5.1]{Mos10} (see also Salehi \cite[Remark 3.5]{sal22}).

Finally, we prove that the $\PA$-based Smullyan model $M_{\PA}$ is also a witness of the non-implication $\SFTarski \ \&\ \TTarski \overset{\R}{\not \Rightarrow} \STTarski$. 
We already mentioned that $M_{\PA}$ satisfies $\SFTarski$.

\begin{thm}\label{M_PA:non-TT}\leavevmode
\begin{enumerate}
    \item $M_\PA$ does not satisfy $\TTarski^+$. 
    \item $M_\PA$ satisfies $\TTarski$. 
\end{enumerate}
\end{thm}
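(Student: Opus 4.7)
My plan for part 1 is to exhibit a concrete witness. Fix $i \in \Gamma$ with $\varphi_i(v) \equiv \Pr_\PA(v)$, a standard $\Sigma_1$ provability predicate of $\PA$. For any $S \in \Stc_{F_\R}^+$, Lemma~\ref{ab_lem} gives $J_{F_\R}(a_i S) \equiv \Pr_\PA(\gn{J_{F_\R}(S)})$. Since $\mathbb{N} \models \PA$ (so $\PA$ is sound) and $\PA$ formalizes $\Sigma_1$-completeness, the equivalence $\PA \vdash \Pr_\PA(\gn{\psi}) \iff \PA \vdash \psi$ holds for every $\mathcal{L}_A$-sentence $\psi$ (forward direction by soundness in $\mathbb{N}$, backward by provable $\Sigma_1$-completeness). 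Applying this with $\psi \equiv J_{F_\R}(S)$ gives $S \in \Phi_\PA(a_i) \iff S \in \True_{M_\PA}$ for every $S \in \Stc_{F_\R}^+$, so $\Phi_\PA(a_i) \cap \Stc_{F_\R}^+ = \True_{M_\PA}^+$, which witnesses the failure of $\STTarski$.

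For part 2, fix any $M_\PA$-predicate $H \equiv \R^j a_i$ (with $j \geq 0$ and $i \in \Gamma$) and pick a reference index $k^* \in \Gamma$ with $\varphi_{k^*}(v) \equiv (0 = 0)$, a $v$-formula in which $v$ does not actually occur free. Then $J_{F_\R}(a_{k^*}) \equiv \varphi_{k^*}(0) \equiv (0 = 0)$ and $J_{F_\R}(a_{k^*} a_{k^*}) \equiv \varphi_{k^*}(\gn{\varphi_{k^*}(v)}) \equiv (0 = 0)$ are both $\PA$-provable, so $a_{k^*} \in \True_{M_\PA}$ and $a_{k^*} a_{k^*} \in \True_{M_\PA} \cap \Stc_{F_\R}^+$. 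I split by cases on $j$. When $j \geq 2$, Definition~\ref{def:I} gives $I_{F_\R}(\R^j a_i) \equiv \bot$, whence Definition~\ref{def:J} forces $J_{F_\R}(HX) \equiv \bot$ for every $X$, so $\Phi_\PA(H) = \emptyset \neq \True_{M_\PA}$. When $j = 1$, Proposition~\ref{ab_R}.(1), together with the second bullets of cases 2 and 3 of Definition~\ref{def:J}, yields $J_{F_\R}(\R a_i X) \equiv \bot$ whenever $X \notin \Pred_{F_\R}$, so $\Phi_\PA(\R a_i) \subseteq \Pred_{F_\R}$, while $a_{k^*} a_{k^*} \in \True_{M_\PA} \setminus \Pred_{F_\R}$.

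The substantive case is $j = 0$, and my plan is to use $k^*$ to produce two $M_\PA$-predicates that $a_i$ cannot distinguish yet that have opposite truth values. Because $v$ is not free in $\varphi_{k^*}(v) \equiv (0 = 0)$, substitution is vacuous, so $\varphi_{k^*}(d(v))$ equals $\varphi_{k^*}(v)$ as a literal $\mathcal{L}_A$-formula; hence $I_{F_\R}(a_{k^*})(v)$ and $I_{F_\R}(\R a_{k^*})(v)$ are the very same formula $(0 = 0)$ with the same G\"odel number $\gn{0 = 0}$. Case 1 of Definition~\ref{def:J} then gives the identity $J_{F_\R}(a_i a_{k^*}) \equiv J_{F_\R}(a_i \R a_{k^*}) \equiv \varphi_i(\gn{0 = 0})$ as literal $\mathcal{L}_A$-formulas, and so $a_{k^*} \in \Phi_\PA(a_i) \iff \R a_{k^*} \in \Phi_\PA(a_i)$. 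On the other hand, $J_{F_\R}(a_{k^*}) \equiv (0 = 0)$ is $\PA$-provable whereas $J_{F_\R}(\R a_{k^*}) \equiv \bot$ is not (by the second bullet of case 2 of Definition~\ref{def:J}), so exactly one of $a_{k^*}$ and $\R a_{k^*}$ belongs to $\True_{M_\PA}$; hence $\Phi_\PA(a_i) \neq \True_{M_\PA}$. The main obstacle is locating this test pair: the key asymmetry is that the outer $\R$-prefix collapses the $J$-image to $\bot$ while leaving the $I$-image untouched, which is possible precisely because $\varphi_{k^*}$ ignores $v$.
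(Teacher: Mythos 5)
Your proof is correct and follows essentially the same route as the paper's: part 1 is the paper's argument (the provability predicate $\varphi_i(v) \equiv \Pr_\PA(v)$ together with Lemma~\ref{ab_lem} and $\PA \vdash \psi \iff \PA \vdash \Pr_\PA(\gn{\psi})$), and part 2 rests on the same key observation that for a sentence $\varphi_{k^*}$ the substitution in $I_{F_\R}(\R a_{k^*}) \equiv \varphi_{k^*}(d(v))$ is vacuous, so no predicate $H$ can separate $a_{k^*}$ from $\R a_{k^*}$ even though $J_{F_\R}$ sends the latter to $\bot$ and the former to a provable sentence. The only difference is organizational: the paper packages this as a single contradiction chain that works uniformly for every $H \in \Pred_{F_\R}$ (since $J_{F_\R}(H K) \equiv I_{F_\R}(H)(\gn{I_{F_\R}(K)(v)})$ for any predicate $K$), which renders your case split on the number of leading $\R$'s in $H$ unnecessary, though not incorrect.
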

\begin{proof}
1. Let $\varphi_i(v)$ be a provability predicate of $\PA$. 
We shall prove $\True_{M_{\PA}}^+ = \Phi_{\PA}(a_i) \cap \Stc_{F_{\R}}^+$, which shows that $M_{\PA}$ does not satisfy $\STTarski$. 
Let $S \in \Stc_{F_{\R}}^+$. 
\begin{align*}
    S \in \True_{M_{\PA}} & \iff \PA \vdash J_{F_\R}(S) \iff \PA \vdash \varphi_i(\gn{J_{F_\R}(S)}) \\
    & \iff \PA \vdash J_{F_\R}(a_i S) \iff S \in \Phi_{\PA}(a_i). 
\end{align*}

2. Suppose, towards a contradiction, that $H \in \Pred_{F_\R}$ names $\True_{M_{\PA}}$. 
Let $\varphi_i$ be an $\mathcal{L}_A$-sentence such that $\PA \vdash \varphi_i$. 
Then, we have $I_{F_\R}(a_i) \equiv J_{F_\R}(a_i) \equiv I_{F_\R}(r a_i) \equiv \varphi_i$ because $\varphi_i$ is a sentence. 
On the other hand, $J_{F_\R}(r a_i) \equiv J_{F_\R}(r a_i \epsilon) \equiv \bot$. 

Since $\PA \vdash \varphi_i$, we get $\PA \vdash J_{F_\R}(a_i)$, and hence $a_i \in \True_{M_{\PA}}$. 
By the supposition, we have $a_i \in \Phi_{\PA}(H)$, and hence $\PA \vdash J_{F_\R}(Ha_i)$. 
Since $a_i \in \Pred_{F_\R}$, we have $\PA \vdash J_{F_\R}(H)(\gn{I_{F_\R}(a_i)})$ and so $\PA \vdash J_{F_\R}(H)(\gn{\varphi_i})$. 
It follows that $\PA \vdash J_{F_\R}(H)(\gn{I_{F_\R}(r a_i)})$. 
Then, $\PA \vdash J_{F_\R}(H r a_i)$ and thus $r a_i \in \Phi_{\PA}(H)$. 
By the supposition, we have $r a_i \in \True_{M_{\PA}}$. 
Therefore, $\PA \vdash J_{F_\R}(r a_i)$ and this means $\PA \vdash \bot$. 
This is a contradiction. 
\end{proof}

\section{Concluding remarks}

In this paper, we focused on Smullyan's paper ``Truth and Provability'' (2013, \textit{The Mathematical Intelligencer}). 
We introduced the notion of Smullyan models in Section \ref{SmullyanModels}. 
In order to understand deeply the content of Smullyan's paper, we have mainly studied this notion throughout the present paper. 
In Section \ref{properties}, we introduced several properties of Smullyan models and studied the equivalences between some of these properties. 
These equivalences are summarized in Figure \ref{Fig1}. 
In particular, these equivalences show that it is sufficient to consider the four properties $\TTarski$, $\FTarski$, $\STTarski$, and $\SFTarski$ when dealing with the properties we introduced.
In Section \ref{non-implications}, we provided several Smullyan models which are witnesses of non-implications between some combinations of these four properties. 
As a consequence of these non-implications, no more arrows can be added into Figure \ref{Fig1}. 
Finally in Section \ref{arithmetic}, we introduced the $\N \R$-Smullyan model $M_{\mathbb{N}}$ and the $\R$-Smullyan model $M_{\PA}$ based on $\mathbb{N}$ and $\PA$, respectively. 
We proved that $\STTarski$ for $M_{\mathbb{N}}$ corresponds to the original Tarski's Undefinability Theorem. 
Also, we showed that $M_{\PA}$ is a witness of the non-implication $\SFTarski \ \&\ \TTarski \overset{\R}{\not \Rightarrow} \STTarski$. 

Through these studies, we have found that Smullyan's framework is simple but powerful, and has a fertile structure. 
In particular, it may be said that the essence of the proof of Tarski's Undefinability Theorem is reasonably explained within this framework as indicated in Theorem \ref{M_N:Tarski}. 
However, this framework seems too simple for a more intricate discussion of the First Incompleteness Theorem. 
In his 2013 book \cite{Smu13b}, Smullyan introduced an extended framework that enables the analysis of Rosser's First Incompleteness Theorem.
It will be interesting to explore what extensions, including this one, can make sense for the analysis of the First Incompleteness Theorem in a future work.

\section*{Acknowledgements}

We would like to thank Naoyuki Hatanaka and Fumihisa Ikema for their comments during the early stages of the study. 
We would also like to thank the anonymous referee for suggesting some improvements.
The first author was supported by JSPS KAKENHI Grant Number JP23K03200.

\bibliographystyle{plain}
\bibliography{ref}

\end{document}